\numberwithin{equation}{section}
\newtheorem{thm}{Theorem}[section]
\newtheorem{cor}[thm]{Corollary}
\newtheorem{prop}[thm]{Proposition}
\newtheorem{exm}[thm]{Example}
\newtheorem{claim}[thm]{Claim}
\newcommand{\cH}{\mathcal H}
\newcommand{\cI}{\mathcal I}
\newcommand{\cJ}{\mathcal J}
\newcommand{\cL}{\mathcal L}
\newcommand{\cM}{\mathcal M}
\newcommand{\cN}{\mathcal N}
\newcommand{\cO}{\mathcal O}
\newcommand{\cP}{\mathcal P}
\newcommand{\cY}{\mathcal Y }
\newcommand{\bL}{\bf L}
\newcommand{\C}{\mathbb C}
\newcommand{\HH}{\mathbb{H}}
\renewcommand{\H}{\HH}
\newcommand{\PP}{\mathbb P}
\renewcommand{\P}{\PP}
\newcommand{\Z}{\mathbb Z}
\newcommand{\fg}{\mathfrak{g}}
\renewcommand{\gg}{\fg}
\newcommand{\onto}{\twoheadrightarrow}
\newcommand{\ra}{\rightarrow}
\newcommand{\bs}{\bigskip}
\newcommand{\End}{{\mbox {End~}}}
\newcommand{\Hom}{{\mbox{Hom}}}
\newcommand{\Aut}{{\mbox{Aut~}}}
\newcommand{\Res}{{\mbox{Res~}}}
\newcommand{\Der}{{\mbox{Der~}}}
\newcommand{\kerr}{{\mbox{ker~}}}
\renewcommand{\ker}{\kerr}
\def\comment#1{{}}
\def\question#1{{}}
\newcommand{\quash}[1]{}  
\newcommand{\red}[1]{}  
\newcommand{\blue}[1]{{}}
\newcommand{\green}[1]{{}}
\newcommand{\Cx}{\mathbb{C}^\times}
\title{Holonomic Systems for Period Mappings}
\author{Jingyue Chen, An Huang, Bong H. Lian}
\begin{document}

\maketitle
{\it Dedicated to Professor Shing-Tung Yau on the occasion of his 65th birthday.}

\begin{abstract}
Period mappings were introduced in the sixties  \cite{G} to study variation of complex structures of families of algebraic varieties.
The theory of tautological systems was introduced recently \cite{LSY}\cite{LY} to understand period integrals of algebraic manifolds. In this paper, we give an explicit construction of a tautological system for each component of a period mapping. 
\end{abstract}

\tableofcontents

\pagenumbering{arabic}
\addtocounter{page}{0}

\section{Setup}
\subsection{Tautological systems}
We will follow notations in \cite{HLZ}.
Let $G$ be a connected algebraic group over $\C$. Let $X$ be a complex projective $G$-variety and let $\cL$ be a very ample $G$-bundle over $X$ which gives rise to a $G$-equivariant embedding 
\[X\ra\P(V),\]
where $V=\Gamma(X,\cL)^\vee$. Let $n=\dim V$. We assume that the action of $G$ on $X$ is locally effective, i.e. $\ker(G\ra\Aut (X))$ is finite. 
Let $\hat{G}:=G\times\Cx$, whose Lie algebra is $\hat{\gg}=\gg\oplus \C e$, where $e$ acts on $V$ by identity. We denote by $Z:\hat{G}\ra \text{GL}(V)$ the corresponding group representation, and by $Z:\hat{\gg}\ra\End(V)$ the corresponding Lie algebra representation. Note that under our assumption, $Z:\hat{\gg}\ra\End(V)$ is injective.

Let $\hat{\iota}: \hat{X}\subset V $ be the cone of $X$, defined by the ideal $I(\hat{X})$. Let $\beta:\hat{\gg}\ra \C$ be a Lie algebra homomorphism. Then a {\it tautological system } as defined in \cite{LSY}\cite{LY} is the cyclic D-module on $V^\vee$
\[\tau(G,X,\cL,\beta)=D_{V^\vee}/D_{V^\vee}J(\hat{X})+D_{V^\vee}(Z(x)+\beta(x), x\in\hat{\gg}),\]
where $$J(\hat{X})=\{\widehat{P}\mid P\in I(\hat{X})\}$$
is the ideal of the commutative subalgebra $\C[\partial]\subset D_{V^\vee}$ obtained by the Fourier transform of $I(\hat{X})$. Here $\widehat{P}$ denotes the Fourier transform of $P$.

Given a basis $\{a_1,\ldots, a_n\}$ of $V$, we have $Z(x)=\sum_{ij}x_{ij}a_i\frac{\partial}{\partial{a_j}}$, where $(x_{ij})$ is the matrix representing $x$ in the basis. Since the $a_i$ are also linear coordinates on $V^\vee$, we can view $Z(x)\in\Der \C[V^\vee]\subset D_{V^\vee}$. In particular, the identity operator $Z(e)\in\End V$ becomes the Euler vector field on $V^\vee$.

Let $X$ be a $d$-dimensional compact complex manifold such that its anti-canonical line bundle $\cL:=\omega_X^{-1}$ is very ample. We shall regard the basis elements $a_i$ of $V=\Gamma(X,\omega_X^{-1})^{\vee}$ as linear coordinates on $V^{\vee}$. Let $B:=\Gamma(X,\omega_X^{-1})_{sm}\subset V^{\vee}
$, which is Zariski open.

 Let $\pi:\cY\ra B
$
 be the family of smooth CY hyperplane sections $Y_a\subset X$, and let $\H^{\text{top}}$ be the Hodge bundle over $B$ whose fiber at $a\in B$ is the line $\Gamma(Y_a,\omega_{Y_a})\subset H^{d-1}(Y_a)$. In \cite{LY} the period integrals of this family are constructed by giving a canonical trivialization of $\H^{\text{top}}$. Let $\Pi$ be the period sheaf of this family, i.e. the locally constant sheaf generated by the period integrals.

 Let $G$ be a connected algebraic group acting on $X$.

\begin{thm}[See \cite{LY}]\label{tautological}
The period integrals of the family $\pi:\cY\ra B$ are solutions to 
\[\tau\equiv\tau(G,X,\omega_X^{-1},\beta_0)\]
where $\beta_0$ is the Lie algebra homomorphism with $\beta_0(\gg)=0$ and $\beta_0(e)=1$.
\end{thm}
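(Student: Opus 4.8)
The plan is to recall from \cite{LY} the residue representation of the periods of $\pi:\cY\ra B$ and then to check directly that each such period is annihilated by $J(\hat X)$ and by the symmetry operators $Z(x)+\beta_0(x)$, $x\in\hat{\gg}$. Since the period sheaf $\Pi$ is generated over $\C$ by the functions $\Pi_\gamma(a):=\int_\gamma\omega_a$ as $\gamma$ runs over a locally constant family in $H_{d-1}(Y_a)$, it suffices to treat one $\Pi_\gamma$. For $a\in V^\vee$ write $f_a\in\C[V]$ for the associated linear function on $V$; in the coordinates $a_i$ on $V^\vee$ dual to the chosen basis $\{a_i\}$ of $V$ one has $f_a=\sum_i a_i x_i$, where the $x_i$ are the corresponding linear coordinates on $V$, so that $I(\hat X)\subset\C[x_1,\dots,x_n]$ and $\widehat{P}=P(\partial_{a_1},\dots,\partial_{a_n})$ up to signs. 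Put $\hat X^\circ:=\hat X\setminus\{0\}$. Because $X$ is smooth and $\omega_X^{-1}$ is very ample, $\hat X^\circ\ra X$ is the total space of $\omega_X$ with its zero section deleted, and hence carries a canonical nowhere-vanishing holomorphic $(d+1)$-form $\Omega_0$: in a local trivialization $\omega_X|_U=\mathcal{O}_U\,\eta$ with fibre coordinate $t$ one sets $\Omega_0=\pi^*\eta\wedge dt$, and this is independent of the trivialization because the correction term involves the pullback of a $(d+1)$-form on the $d$-dimensional $X$. One checks that $\Omega_0$ is homogeneous of degree $1$ for the $\Cx$-scaling of the fibres, exactly as $f_a$ is, so $\Omega_0/f_a$ is a degree-$0$ meromorphic $(d+1)$-form on $\hat X^\circ$ with a simple pole along $\hat Y_a:=\hat X\cap\{f_a=0\}$. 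As established in \cite{LY}, the canonical trivialization of $\H^{\text{top}}$ is normalized so that, up to a universal nonzero constant,
\[\Pi_\gamma(a)=\int_{\Gamma_a}\frac{\Omega_0}{f_a},\]
where $\Gamma_a\subset\hat X^\circ\setminus\hat Y_a$ is the real $(d+1)$-cycle obtained by lifting an iterated Leray tube of $\gamma\subset Y_a$ to the unit circle bundle of $\hat X^\circ\ra X$; moreover the family $\{\Gamma_a\}_{a\in B}$ is locally constant by Ehresmann's lemma, so $\Pi_\gamma$ is holomorphic on $B$ and one may differentiate it under the integral sign.

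First I would verify that $J(\hat X)$ annihilates $\Pi_\gamma$. Since $\hat X$ is a cone, $I(\hat X)$ is homogeneous, so it is enough to treat a homogeneous $P\in I(\hat X)$ of degree $m$. The $x_i$ do not depend on $a$ and $f_a$ is linear in $a$, so $\partial_{a_{i_1}}\!\cdots\partial_{a_{i_m}}f_a^{-1}=(-1)^m m!\,x_{i_1}\!\cdots x_{i_m}\,f_a^{-m-1}$, and differentiating under the integral sign gives
\[\widehat{P}\,\Pi_\gamma(a)=\pm\,m!\int_{\Gamma_a}\frac{P(x_1,\dots,x_n)\,\Omega_0}{f_a^{m+1}}=0,\]
because the integrand vanishes identically along $\Gamma_a\subset\hat X^\circ\subset\hat X$ while $P$ vanishes on $\hat X$. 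Hence $D_{V^\vee}J(\hat X)$ kills $\Pi_\gamma$.

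Next I would treat the operators $Z(x)+\beta_0(x)$ for $x\in\hat{\gg}$. For $x\in\gg$ one has $\beta_0(x)=0$, and the required identity $Z(x)\Pi_\gamma=0$ is the infinitesimal $G$-invariance of the period: $\hat X$ is $G$-stable, the intrinsic form $\Omega_0$ is $G$-invariant, and the contragredient $G$-action on $V^\vee$ is compatible with $f_{(\cdot)}$ and with the loci $\hat Y_{(\cdot)}$, so a linear change of variables on $V$ turns $\Pi_\gamma(e^{sx}\cdot a)$ into $\int_{\Gamma'_s}\Omega_0/f_a$, where $\{\Gamma'_s\}$ is a continuous family of cycles lying in the $s$-independent space $\hat X^\circ\setminus\hat Y_a$ with $\Gamma'_0=\Gamma_a$; hence this integral is independent of $s$ and $Z(x)\Pi_\gamma=0$. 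For $x=e$ one has $\beta_0(e)=1$; since $f_{\lambda a}=\lambda f_a$ while $\hat Y_{\lambda a}=\hat Y_a$ (so $\Gamma_{\lambda a}\sim\Gamma_a$), the representation above gives $\Pi_\gamma(\lambda a)=\lambda^{-1}\Pi_\gamma(a)$, i.e.\ $\Pi_\gamma$ is homogeneous of degree $-1$, and applying the Euler vector field $Z(e)$ yields $(Z(e)+1)\Pi_\gamma=0$. Combining the three computations, each $\Pi_\gamma$---and hence every local section of $\Pi$---is a solution of $\tau=\tau(G,X,\omega_X^{-1},\beta_0)$.

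The substantive content is concentrated entirely in the first step: producing the canonical form $\Omega_0$ on $\hat X^\circ$, checking that the Poincar\'e residue of $\Omega_0/f_a$ reproduces the distinguished trivialization of $\H^{\text{top}}$, and establishing the cone representation $\Pi_\gamma(a)=\int_{\Gamma_a}\Omega_0/f_a$ together with the local constancy of the cycles $\Gamma_a$. This is exactly the construction of period integrals carried out in \cite{LY}, which we may invoke; granting it, the verification of the defining equations of $\tau$ is the elementary differentiation computation above and presents no further difficulty.
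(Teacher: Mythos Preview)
The paper does not prove this theorem; it is quoted from \cite{LY} and used as a black box. Your proposal is a correct outline of the argument given there: express the period as an integral of a canonical meromorphic form over a locally constant tube cycle on the punctured cone, then verify the three blocks of generators of $\tau$ by (i) differentiating under the integral sign and using $P|_{\hat X}=0$ for $J(\hat X)$, (ii) $G$-equivariance of the canonical form and of the family of cycles for $Z(x)$ with $x\in\gg$, and (iii) homogeneity of degree $-1$ for the Euler relation $Z(e)+1$.

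One small remark on packaging rather than content: the paper's later exposition (and \cite{LY}) works on the principal $\Cx$-bundle $(K_X^{-1})^\times\to X$, i.e.\ the total space of $\omega_X^{-1}$ with the zero section removed, whereas you work on the affine cone $\hat X^\circ\subset V$, which is the total space of $\omega_X$ with the zero section removed. These are dual $\Cx$-torsors and each carries a canonical $G$-invariant top form; the resulting period formulas are the same up to the obvious identification, so this is a notational rather than a mathematical difference. Your acknowledgment that the substantive input---the construction of $\Omega_0$, its $G$-invariance, and the tube-cycle representation of $\Pi_\gamma$---is precisely what is supplied by \cite{LY} is appropriate, since that is exactly how the present paper treats the theorem.
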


In \cite{LSY} and \cite{LY}, it is shown that if $G$ acts on $X$ by finitely many orbits, then 
$\tau$ is regular holonomic.

\bs
\subsection{Period mapping} 

Now we consider the period mapping of the family $\cY$: 
\begin{align*}
\cP^{p,k}: B&\ra\text{Gr}(b^{p,k},H^k(Y_{a_0},\C))/\Gamma\\
a&\mapsto [F^pH^k(Y_a,\C)]\subset [H^k(Y_a,\C)]\cong [H^k(Y_{a_0},\C)]
\end{align*}
where  $a_0\in B$ is a fixed base point, $$b^{p,k}:=\dim F^pH^k(Y_a,\C)=\dim F^pH^k(Y_{a_0},\C),$$
and $\Gamma$ is the monodromy group acting on $\text{Gr}(b^{p,k},H^k(Y_{a_0},\C))$.

Consider the local system $R^k\pi_*\C$, its stalk at $a\in B$ is 
$H^k(Y_a,\C)$. The Gauss-Manin connection on the vector bundle $\cH^k=R^k\pi_*\C\otimes \cO_B$ 
 has the property that 
$$\nabla F^p\cH^k\subset F^{p-1}\cH^k\otimes\Omega_B.$$

If we choose a vector field $v\in\Gamma(B,TB)$, then $\nabla_v F^p\cH^k\subset F^{p-1}\cH^k$, i.e. $\nabla_v F^pH^k(Y_a,\C)\subset F^{p-1}H^k(Y_a,\C).$

\blue{In Voisin it is $\nabla F^p\cH^k\subset F^{p-1}\cH^k\otimes\Omega_B$, do we need to worry about $\Omega_B$?\\} \red{It is better just to write the sheaf version in Voisin. But to drop $\Omega_B$, you must replace $\nabla$ by $\nabla_v$ for $v\in\Gamma(B,TB)$.}  \blue{OK.}

\bs

{\it Throughout this paper we shall consider the case $k=d-1.$}

$\H^{\text{top}}$ is the bundle on $B$ whose fiber is $$H^0(Y_a,\Omega^{d-1})=H^{d-1,0}(Y_a)=F^{d-1}H^{d-1}(Y_a,\C).$$
Thus $\H^{\text{top}}=F^{d-1}\cH^{d-1}.$
Theorem \ref{tautological} tells us that its integral over a $(d-1)$-cycle on each fiber $Y_{a_0}$ is governed by the tautological system $\tau$.

We shall describe the Gauss-Manin connection explicitly below.

\green{I think this last sentence should be expressed more concretely: e.g. according to HLZ, for our canonical choice of trivilization of $H^{\top}$ explicitly as a global Poincare residue formula, the Gauss-Manin connection takes this specific form as taking partial derivatives w.r.t. $a_i$, on our chosen trivilization: what this taking partial derivative means could be explicitly written as a formula as in HLZ, just as what one would formally expect. Maybe more precisely is to say that this is how Gauss-Manin D-module action on the open complement family can be described explicitly. And so perhaps the description of the trivilization may be written before this paragraph.}

 \red{Show spell out that $f=\sum a_i a_i^*$ is thought of as the universal section $V^\vee\times X\ra\bL$. Also mention explicitly in the Intro. that we will mostly follow notations in [HLZ].} \blue{OK.}

Let $(K_X^{-1})^\times$ denote the complement of the zero section in the total space $K_X^{-1}$ of $\cL$.
Consider the principal $\Cx$-bundle $(K_X^{-1})^\times\ra X$ (with right action). Then there is a natural one-to-one correspondence between sections of $\omega_X^{-1}$ and $\Cx$-equivariant morphisms $f: (K_X^{-1})^\times\ra \C$, i.e. $f(m\cdot h^{-1})=hf(m)$. We shall write $f_a$ the function that represents the section $a$. Since $(K_X^{-1})^\times$ is a CY bundle over $X$ by \cite{LY}, it admits a global non-vanishing top form $\hat{\Omega}$. Let $x_0$ be the vector field generated by $1\in \C=\text{Lie} (\Cx)$. Then $\Omega:=\iota_{x_0}\hat{\Omega}$ is a $G$-invariant $\Cx$-horizontal form of degree $d$ on $(K_X^{-1})^\times$. 
 Moreover, since $\frac{\Omega}{f_a}$
is $G\times \Cx$-invariant, it defines a family of meromorphic top form on $X$ with pole along $V(f_a)$ \cite[Thm. 6.3]{LY}.

 Let $a_i^*\in \Gamma(X, \omega_X^{-1})$ be the dual basis of $a_i$. Let $f=\sum a_i a_i^*$ be the universal section of $V^\vee\times X\ra K_X^{-1}$. Let $V(f)$ be the universal family of hyperplane sections,  where $V(f_a)=Y_a$ is the zero locus of the section $f_a\equiv a\in V^\vee$. 
 Let $U:=V^\vee\times X-V(f)$ and $U_a=X-V(f_a)$.  Let $\pi^\vee: U\ra V^\vee$ denote the projection.

 Let $\hat{\pi}^\vee: U\ra B$ be the restriction of  $\pi^\vee$ to $B$.
Then there is a vector bundle $\hat{\cH}^d:=R^d(\hat{\pi}^\vee)_*\C\otimes\cO_B$ whose fiber is $H^d(X-V(f_a))$ at $a\in B$. Then $\frac{\Omega}{f}$ is a global section of $F^d\hat{\cH}^d$. And the Gauss-Manin connection on $\frac{\Omega}{f}$ is $$\nabla_{\partial a_i}\frac{\Omega}{f}=\frac{\partial}{\partial a_i}\frac{\Omega}{f},$$ where $\partial_{a_i}:=\frac{\partial}{\partial a_i}, i=1,\ldots,n.$

Consider the residue map $\Res: H^d(X-V(f_a))\ra H^{d-1}(Y_a,\C)$, it is shown in \cite{LY} that $\Res\frac{\Omega}{f}\in  \Gamma(B, \cH^{d-1})$ is a canonical global trivialization of $\H^{\text{top}}=F^{d-1}\cH^{d-1}.$ And similarly we have $$\nabla_{\partial a_i}\Res\frac{\Omega}{f}=\frac{\partial}{\partial a_i}\Res\frac{\Omega}{f}=\Res\frac{\partial}{\partial a_i}\frac{\Omega}{f}.$$

In \cite[Coro. 2.2 and Lemma 2.6]{HLZ}, it is shown that 
\begin{thm}\label{tau-surj}
If $\beta(\gg)=0$ and $\beta(e)=1$, there is a canonical surjective map 
\[\tau\onto H^0\pi^\vee_+\cO_U, \quad 1\mapsto \dfrac{\Omega}{f}.\]
\end{thm}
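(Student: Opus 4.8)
The plan is to follow \cite[Coro.~2.2 and Lemma~2.6]{HLZ}: first exhibit $\frac{\Omega}{f}$ as a cyclic vector in $H^0\pi^\vee_+\cO_U$ killed by the defining operators of $\tau$, which produces the morphism; then prove surjectivity by a Griffiths--Dwork type argument on pole orders. Only the second step carries real content.

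For the morphism, recall that $\tau$ is cyclic on the generator $1$, so it suffices to check that the class of $\frac{\Omega}{f}$ in $H^0\pi^\vee_+\cO_U$ is annihilated by $J(\hat{X})$ and by the operators $Z(x)+\beta(x)$, $x\in\hat{\gg}$. Using the Gauss--Manin description recalled in the excerpt — $\partial_{a_i}$ acts by $\partial/\partial a_i$, $\partial_{a_i}f=a_i^*$, and $\Omega$ is independent of $a$ — I would iterate $\partial_{a_i}\frac{1}{f}=-\frac{a_i^*}{f^2}$ to get, for homogeneous $P\in\Sym^m V^\vee$, the identity $\widehat{P}\cdot\frac{\Omega}{f}=\pm\,m!\,\frac{P(a^*)\,\Omega}{f^{m+1}}$, where $P(a^*)\in\Gamma(X,\omega_X^{-m})$ is the image of $P$ under the multiplication map $\Sym^m V^\vee=\Sym^m\Gamma(X,\omega_X^{-1})\to\Gamma(X,\omega_X^{-m})$. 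Since $I(\hat{X})_m$ is by definition the kernel of this map, $P\in I(\hat{X})$ gives $P(a^*)=0$, hence $\widehat{P}\cdot\frac{\Omega}{f}=0$, so $J(\hat{X})$ kills $\frac{\Omega}{f}$. For $x\in\gg$ the hypothesis gives $\beta(x)=0$; since the universal section $f=\sum_i a_ia_i^*$ is $G$-equivariant and $\Omega$ is $G$-invariant, $\frac{\Omega}{f}$ is a $G$-invariant relative top form on $U$, so it defines a $G$-invariant global section of the $G$-equivariant D-module $H^0\pi^\vee_+\cO_U$ and is killed by the generating vector field $Z(x)$. Finally $Z(e)=\sum_i a_i\partial_{a_i}$ is the Euler field, and $\sum_i a_i\frac{\partial}{\partial a_i}\frac{\Omega}{f}=-\frac{(\sum_i a_ia_i^*)\,\Omega}{f^2}=-\frac{\Omega}{f}$, so $(Z(e)+\beta(e))\cdot\frac{\Omega}{f}=0$ because $\beta(e)=1$. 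This gives the canonical morphism $\tau\to H^0\pi^\vee_+\cO_U$.

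For surjectivity (which needs no group action), I would fix $a$ with $Y_a=V(f_a)$ smooth. Since $Y_a\in|\omega_X^{-1}|$ is very ample, $U_a=X\setminus Y_a$ is affine, so $H^d(U_a,\C)=H^d_{\text{dR}}(U_a)$ is the quotient of global algebraic $d$-forms on $U_a$ by exact ones; every such form is a meromorphic $d$-form on $X$ with poles only along $Y_a$, hence lies in $\Gamma(X,\omega_X(kY_a))\cong\Gamma(X,\omega_X^{-(k-1)})$ for some $k$ (using $\cO_X(Y_a)\cong\omega_X^{-1}$). Projective normality of the anticanonical embedding makes $\Gamma(X,\omega_X^{-(k-1)})$ spanned by images of degree-$(k-1)$ monomials in the $a_i^*$, so every such form is $\frac{R(a^*)\,\Omega}{f_a^{k}}$ for some $R\in\Sym^{k-1}V^\vee$, and by the computation above its class equals $\pm\frac{1}{(k-1)!}\,\widehat{R}\cdot\frac{\Omega}{f}$, which lies in $D_{V^\vee}\cdot\frac{\Omega}{f}$, the image of the morphism. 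Hence the image contains classes spanning $H^d(U_a,\C)$ for every such $a$. Combining this with the corresponding relative statement over $V^\vee$ — that $H^0\pi^\vee_+\cO_U$ is generated, already as an $\cO_{V^\vee}$-module, by the forms $\frac{R(a^*)\Omega}{f^k}$ — yields surjectivity.

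The main obstacle will be the surjectivity, and within it two points. First, identifying meromorphic $d$-forms of bounded pole order along $Y_a$ with a symmetric power of $V^\vee$ relies on projective normality of the anticanonical embedding, together with the Griffiths--Dwork fact that $H^d(U_a)$ is exhausted by the forms $\frac{\Omega}{f_a^{k}}$ as the pole order grows. Second, and more seriously, one must upgrade the fiberwise spanning to a genuine surjection of $D_{V^\vee}$-modules on all of $V^\vee$: this forces one to argue with the relative de Rham complex of $\pi^\vee$ directly, as in \cite[Lemma~2.6]{HLZ}, so that no quotient D-module supported on the discriminant (or on the degenerate locus $\{f_a\equiv 0\}$) can survive. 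By contrast, the construction of the morphism itself is essentially formal once one has the explicit trivialization $\frac{\Omega}{f}$ of $\H^{\text{top}}$.
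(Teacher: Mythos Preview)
The paper does not prove this theorem at all: it is simply quoted from \cite[Coro.~2.2 and Lemma~2.6]{HLZ}, with no argument given in the present paper. Your proposal explicitly follows that same reference and gives a reasonable outline of the two-step argument there (existence of the morphism from annihilation of $\frac{\Omega}{f}$ by the defining operators, then cyclicity from a pole-order filtration argument), so there is nothing in the paper to compare against beyond the citation you already invoke.

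One caution on your sketch: your surjectivity step leans on projective normality of the anticanonical embedding to identify $\Gamma(X,\omega_X^{-(k-1)})$ with the image of $\Sym^{k-1}V^\vee$, and then on a fiberwise-to-global passage that you flag as the ``serious'' point. In \cite{HLZ} the argument is run directly at the level of the relative de Rham complex for $\pi^\vee$ (so the $\cO_{V^\vee}$-module $H^0\pi^\vee_+\cO_U$ is seen to be spanned by the classes $\frac{g\,\Omega}{f^k}$ before any fiberwise restriction), which avoids having to upgrade a pointwise statement and also handles the non-smooth locus uniformly. If you want your write-up to stand on its own rather than as a pointer to \cite{HLZ}, that relative formulation is what you should make precise.
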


We now want  to give an explicit description of each step of the Hodge filtration.


Let $X$ be a projective variety of dimension $d$ and $Y$ a smooth hypersurface. We make the following hypothesis:

\begin{flalign*}\label{suffi-ample}\tag{*}
\text{ For every } i>0, \, k>0,\,& j\geq0, \text{ we}\text{ have}& \\
H^i(X,&\Omega^j_X(kY))=0,  \\
\text{where }  \Omega^j_X(kY)=\Omega^j_X\otimes \cO_X&(Y)^{\otimes k}.&
\end{flalign*}
\begin{thm}[Griffiths]\label{Griffiths}Let $X$ be a projective variety and $Y$ a smooth hypersurface. Assume \eqref{suffi-ample} holds. 
Then for every integer $p$ between $1$ and $d$, the image of the natural map 
$$H^0(X,\Omega_X^d(pY))\ra H^d(X-Y, \C)$$
which to a section $\alpha$ (viewed as a meromorphic form on $X$ of degree $d$, holomorphic on $X-Y$ and having a pole of order less than or equal to $p$ along $Y$) associates its de Rham cohomology class, is equal to $F^{d-p+1}H^d(X-Y)$ (See \cite[II, p 160]{Vo}.)
\end{thm}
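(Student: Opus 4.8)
The plan is to compute $H^d(X-Y,\C)$ as the hypercohomology of the meromorphic de Rham complex, use the vanishing hypothesis \eqref{suffi-ample} to collapse it onto a single cokernel of spaces of global forms, and then match the filtration by order of pole with the Hodge filtration term by term.

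First I would invoke Grothendieck's theorem to identify $H^d(X-Y,\C)=\H^d(X,\Omega_X^\bullet(*Y))$, where $\Omega_X^q(*Y)=\varinjlim_k\Omega_X^q(kY)$ is the sheaf of meromorphic $q$-forms with arbitrary poles along $Y$. Since cohomology commutes with this filtered colimit and every $H^r(X,\Omega_X^q(kY))$ with $r>0$ and $k\ge 1$ vanishes by \eqref{suffi-ample}, the spectral sequence $E_1^{q,r}=H^r(X,\Omega_X^q(*Y))\Rightarrow\H^{q+r}(X,\Omega_X^\bullet(*Y))$ is concentrated in the row $r=0$; since $\Omega_X^{d+1}=0$ this yields
\[
H^d(X-Y,\C)=\operatorname{coker}\!\Big(d\colon H^0(X,\Omega_X^{d-1}(*Y))\longrightarrow H^0(X,\Omega_X^d(*Y))\Big),
\]
so the subspaces $\cK_p:=\mathrm{Im}\big(H^0(X,\Omega_X^d(pY))\to H^d(X-Y,\C)\big)$ form an exhaustive increasing filtration and the claim becomes $\cK_p=F^{d-p+1}H^d(X-Y)$ for $1\le p\le d$. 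The inclusion $\cK_p\subseteq F^{d-p+1}$ --- that order of pole bounds the Hodge level from below --- holds already for $Y$ smooth, by comparing the pole filtration on $\Omega_X^\bullet(*Y)$ with the stupid filtration of the logarithmic complex $\Omega_X^\bullet(\log Y)$, which includes quasi-isomorphically into $\Omega_X^\bullet(*Y)$ and computes the Hodge filtration via Deligne's $E_1$-degeneration.

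For the reverse inclusion I would compare associated graded pieces. Filtering $\Omega_X^\bullet(*Y)$ by order of pole --- in degree $q$ the subsheaf $\Omega_X^q(\max(q-d+p,0)\,Y)$, whose top term is $\Omega_X^d(pY)$ --- and using \eqref{suffi-ample} to kill the higher cohomology of its graded pieces, hence to force the relevant connecting maps surjective, one obtains Griffiths' ``reduction of pole order'':
\[
\cK_p/\cK_{p-1}\;\cong\;\frac{H^0(X,\Omega_X^d(pY))}{H^0(X,\Omega_X^d((p-1)Y))+dH^0(X,\Omega_X^{d-1}((p-1)Y))}.
\]
On the Hodge side, Deligne's mixed Hodge structure on $H^d(X-Y)$ carries only the weights $d$ and $d+1$, with $W_dH^d(X-Y)=H^d(X)$ and $\operatorname{Gr}^W_{d+1}\cong H^{d-1}_{\mathrm{prim}}(Y)(-1)$ via the residue, and $\dim\operatorname{Gr}_F^{d-p+1}H^d(X-Y)=h^{p-1}(X,\Omega_X^{d-p+1}(\log Y))$ by $E_1$-degeneration. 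Using the adjunction isomorphism $\Omega_X^d(pY)|_Y\cong\omega_Y\otimes\cO_X(Y)^{\otimes(p-1)}|_Y$ to read off the ``$p$-th residue'' of a pole-order-$p$ form, together with the residue exact sequence $0\to H^d(X)\to H^d(X-Y)\to H^{d-1}_{\mathrm{prim}}(Y)(-1)\to 0$ and the vanishing from \eqref{suffi-ample}, a Riemann--Roch/Euler-characteristic computation shows that the graded piece above has dimension exactly $h^{p-1}(X,\Omega_X^{d-p+1}(\log Y))$. Combining this with $\cK_p\subseteq F^{d-p+1}$ and the exhaustion $\bigcup_p\cK_p=H^d(X-Y)$, a telescoping count of dimensions --- whose base case $\cK_1=F^d$ is immediate since $\Omega_X^d(\log Y)=\omega_X(Y)$ and holomorphic forms on $X$ are $d$-closed --- upgrades the inclusion to equality for $1\le p\le d$.

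The crux, and the only place \eqref{suffi-ample} is genuinely used, is this second half: that the pole-order spectral sequence degenerates enough to give the clean formula for $\cK_p/\cK_{p-1}$, and that the dimension of this graded piece equals that of $\operatorname{Gr}_F^{d-p+1}H^d(X-Y)$. Concretely, the vanishing $H^{i>0}(X,\Omega_X^j(kY))=0$ is what lets one reduce the pole order of the exact (``primitive'') part of a form to the expected level, makes the residue and connecting maps surjective, and keeps the numerical bookkeeping exact; without it one retains only the inclusion $\cK_p\subseteq F^{d-p+1}$. Keeping track of indices across the residue shift --- pole order $p$ on $X$, Hodge level $d-p$ on $Y$, Hodge level $d-p+1$ on $X-Y$ --- is the part demanding the most care.
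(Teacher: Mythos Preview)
The paper does not prove this theorem. It is quoted as a result of Griffiths, with a pointer to Voisin \cite[II, p.~160]{Vo}, and the paper moves directly to Corollary~\ref{filtration} without giving any argument. So there is no ``paper's own proof'' to compare against; your proposal is being measured against a citation.

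That said, your outline is essentially the standard argument one finds in Voisin. The first half --- Grothendieck's meromorphic de Rham, collapse of the Hodge-to-de Rham spectral sequence for $\Omega_X^\bullet(*Y)$ via \eqref{suffi-ample}, and the easy inclusion $\cK_p\subseteq F^{d-p+1}$ from the comparison with $\Omega_X^\bullet(\log Y)$ --- is correct and exactly how the cited reference sets things up. Where you diverge slightly is in closing the reverse inclusion: you propose a dimension count, computing $\dim(\cK_p/\cK_{p-1})$ by Griffiths' reduction-of-pole formula and matching it against $h^{p-1}(X,\Omega_X^{d-p+1}(\log Y))$ via a ``Riemann--Roch/Euler-characteristic computation''. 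This step is vague as written, and is not how Voisin actually finishes. She instead shows that the inclusion of filtered complexes (pole filtration on the meromorphic complex versus the b\^ete filtration on the log complex) is a filtered quasi-isomorphism, by checking that on each graded piece the induced map is a quasi-isomorphism --- which is where \eqref{suffi-ample} enters, via the acyclic resolutions it provides. That route avoids any numerical bookkeeping and gives the equality of filtrations directly rather than by counting. Your approach can be made to work, but the Euler-characteristic matching you allude to would itself unwind to essentially the same sheaf-theoretic comparison, so it is cleaner to do it at the level of complexes from the start.
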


\begin{cor}\label{filtration}
Assume \eqref{suffi-ample} holds for smooth CY hypersurfaces $Y_a\subset X$. 
 Then the de Rham classes of $$\{\nabla_{\partial{a_{\iota_1}}}\cdots \nabla_{\partial{a_{\iota_{p-1}}}}\frac{\Omega}{f_a}\}_{1\leq \iota_1,\ldots, \iota_{p-1}
\leq n}$$ generate the filtration $F^{d-p+1}H^d(U_a)$ for $1\leq p\leq d$.
\end{cor}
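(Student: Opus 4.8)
The plan is to combine Griffiths' description of the Hodge filtration on $H^d(X-Y_a)$ in terms of pole order (Theorem \ref{Griffiths}) with the explicit formula for the Gauss--Manin connection, namely $\nabla_{\partial a_i}\frac{\Omega}{f} = \frac{\partial}{\partial a_i}\frac{\Omega}{f}$. First I would record the elementary computation that taking $\partial_{a_i}$ of a meromorphic form of pole order $\le q$ along $V(f_a)$ produces a form of pole order $\le q+1$: since $f = \sum_j a_j a_j^*$ depends linearly on the $a_i$, we have $\frac{\partial}{\partial a_i}\frac{\Omega}{f^q} = -q\,\frac{a_i^*\,\Omega}{f^{q+1}}$, which manifestly lies in $H^0(X,\Omega_X^d((q+1)Y_a))$ once we know $a_i^*\,\Omega/f^{q+1}$ is a global section of $\Omega_X^d((q+1)Y_a)$. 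Applying this inductively, each iterated derivative $\nabla_{\partial a_{\iota_1}}\cdots\nabla_{\partial a_{\iota_{p-1}}}\frac{\Omega}{f_a}$ is the class of a meromorphic $d$-form with pole order $\le p$ along $Y_a$, hence by Theorem \ref{Griffiths} its de Rham class lies in $F^{d-p+1}H^d(U_a)$. This gives the inclusion ``$\subseteq$'' — that these classes lie in the claimed filtration step.

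For the reverse inclusion, i.e. that these classes actually \emph{generate} $F^{d-p+1}H^d(U_a)$, I would argue by induction on $p$. The base case $p=1$ is the statement that $\frac{\Omega}{f_a}$ generates $F^dH^d(U_a)$, which is exactly the $p=d-p+1=d$ (so $p=1$) case of Theorem \ref{Griffiths} together with the fact that $H^0(X,\Omega_X^d(Y_a))$ is one-dimensional under hypothesis \eqref{suffi-ample} (this is essentially the content of the canonical trivialization of $\H^{\text{top}}$ recalled above, coming from \cite{LY}). For the inductive step, Theorem \ref{Griffiths} identifies $F^{d-p+1}H^d(U_a)$ with the image of $H^0(X,\Omega_X^d(pY_a))$. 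The key point is that every global section of $\Omega_X^d(pY_a)$ can be written, modulo sections with pole order $\le p-1$ and modulo exact forms, as a $\C$-linear combination of the forms $a_{\iota}^*\cdot(\text{section of }\Omega_X^d((p-1)Y_a))/f_a$; granting this, the $\frac{\partial}{\partial a_i}$-formula above lets us realize such a combination as $\sum_i c_i\,\partial_{a_i}$ applied to sections of $\Omega^d_X((p-1)Y_a)$ whose classes, by the inductive hypothesis, are spanned by the length-$(p-2)$ iterated derivatives of $\frac{\Omega}{f_a}$. Taking one more derivative then exhibits the span of $F^{d-p+1}$ in terms of the length-$(p-1)$ iterated derivatives, completing the induction.

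The main obstacle is the surjectivity statement just invoked: that the sections $a_i^*\cdot g/f_a$ with $g$ ranging over $H^0(X,\Omega^d_X((p-1)Y_a))$, $i=1,\dots,n$, span $H^0(X,\Omega^d_X(pY_a))$ modulo lower pole order and exact forms. This should follow from the fact that the $a_i^*$ form a basis of $\Gamma(X,\omega_X^{-1}) = \Gamma(X,\cO_X(Y_a))$ together with the vanishing hypothesis \eqref{suffi-ample}: multiplication $\Gamma(X,\cO_X(Y_a))\otimes H^0(X,\Omega^d_X((p-1)Y_a))\to H^0(X,\Omega^d_X(pY_a))$ is surjective under \eqref{suffi-ample} (a standard Castelnuovo--Mumford regularity / Koszul-type argument using $H^i(X,\Omega^j_X(kY))=0$), and dividing by $f_a = \sum_i a_i a_i^*$ translates this back into the required spanning statement after passing to cohomology where the Griffiths reduction of pole order (integration by parts) kicks in. One should also check that the iterated Gauss--Manin derivatives are well-defined independent of the order of the $\partial_{a_i}$ at the level of cohomology classes — this is immediate since the $\partial_{a_i}$ commute as operators on meromorphic forms — and that the residue/pole-order bookkeeping is consistent with the conventions of \cite{LY} and \cite{HLZ}, but these are routine.
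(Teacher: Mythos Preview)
Your approach is essentially the paper's: reduce via Theorem~\ref{Griffiths} to spanning $H^0(X,\Omega_X^d(pY_a))$, use the explicit formula $\partial_{a_i}(\Omega/f^q)=-q\,a_i^*\Omega/f^{q+1}$, and appeal to surjectivity of a multiplication map on global sections. Two points where the paper's execution is cleaner than your sketch are worth noting.

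First, the detour through ``modulo lower pole order and exact forms'' and ``Griffiths reduction of pole order'' is unnecessary. The paper simply writes down the closed formula
\[
\nabla_{\partial a_{\iota_1}}\cdots\nabla_{\partial a_{\iota_{p-1}}}\frac{\Omega}{f_a}
=(-1)^{p-1}(p-1)!\,\frac{a_{\iota_1}^*\cdots a_{\iota_{p-1}}^*\,\Omega}{f_a^{\,p}},
\]
and then uses the CY isomorphism $\Omega_X^d(pY_a)\cong \cL^{\,p-1}$ (with $\cL=\omega_X^{-1}$) to reduce the question to whether the monomials $a_{\iota_1}^*\cdots a_{\iota_{p-1}}^*$ span $H^0(X,\cL^{\,p-1})$ \emph{on the nose}, as sections---no quotient by exact forms or lower-order poles enters.

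Second, the surjectivity of $H^0(X,\cL^k)\otimes H^0(X,\cL^l)\to H^0(X,\cL^{k+l})$ is deduced in the paper from very ampleness of $\cL$ alone (by restricting the obvious surjection on $\P V$), not from hypothesis~\eqref{suffi-ample} via a Castelnuovo--Mumford argument. Your route through~\eqref{suffi-ample} is plausible but you have not actually checked the regularity condition it would require (e.g.\ one would need $H^1(X,\cO_X)=0$, which~\eqref{suffi-ample} does not literally assert), so as written that step is a gesture rather than a proof. The paper's very-ampleness argument sidesteps this.
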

\red{Typos in subscripts of $\nabla_{a_{\iota_1}}$.} \blue{Fixed.}
\begin{proof}
By our assumption $X$ is  projective and $\cO_X(Y_a)=\omega_X^{-1}$ is very ample.
By Theorem \ref{Griffiths} it is sufficient to show that 
\begin{equation}\label{form}
\C\{\nabla_{\partial{a_{\iota_1}}}\cdots \nabla_{\partial{a_{\iota_{p-1}}}}\frac{\Omega}{f_a}\}_{1\leq \iota_1,\ldots, \iota_{p-1}
\leq n}=H^0(X,\Omega_X^d(pY_a))
\end{equation}

Since $Y_a$ are CY hypersurfaces, there is an isomorphism
$$\cO_X\simeq\Omega^d_X(Y_a), \quad 1\leftrightarrow \frac{\Omega}{f_a}. $$ Let $\cM(pY_a)$ be the sheaf of meromorphic functions  \green{sheaf of meromorphic functions...}\blue{Fixed.}  with a pole along $Y_a$ of order less than or equal to $p$. Then there is an isomorphism
$$\cO_X(Y_a)^{p-1}\simeq \cM((p-1)Y_a), \quad g\leftrightarrow \frac{g}{f_a^{p-1}}.$$ 
Thus 
we have an isomorphism
\[\cO_X \otimes \cO_X(Y_a)^{p-1}\simeq\Omega_X^d(Y_a)\times \cM((p-1)Y_a), \, 1\otimes g\leftrightarrow \frac{\Omega}{f_a}\frac{g}{f_a^{p-1}}.\]

Since $$\Omega^d_X(pY_a):=\Omega^d_X\otimes \cO_X(Y_a)^p\equiv \cO_X(Y_a)^{p-1},$$
we have
\[H^0(X,\Omega_X^d(pY_a))=\{g\frac{\Omega}{f_a^{p}}\mid g\in H^0(X, \cO_X(Y_a)^{p-1}).\}\]

\bs
For $p=1$, 
 the statement is clearly true. 
 
 For $p=2$, $f=\sum_i a_i a_i^*$, $\nabla_{\partial{a_i}}\frac{\Omega}{f_a}=\frac{\partial}{\partial a_i}\frac{\Omega}{f_a}=-\frac{a_i^*\Omega}{f_a^2}.$
Since $\{\frac{a_i^*\Omega}{f_a^2}\mid a_i^*\in  H^0(X,\omega_X^{-1})\}=H^0(X,\Omega_X^d(2Y_a))$, \eqref{form} is true. 
\begin{claim}
For a very ample line bundle $L$ over $X$, 
\[H^0(X,L^{ k})\otimes H^0(X,L^{ l})\ra H^0(X,L^{{k+l}})
\]
is surjective.
\end{claim}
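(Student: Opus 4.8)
We argue in the setting above, so that $L=\omega_X^{-1}$ is very ample and the vanishing \eqref{suffi-ample} is in force. The plan is first to reduce, via symmetric powers, to a statement about $H^0(X,L)\otimes H^0(X,L^m)$, then to a cohomology vanishing for the syzygy bundle of $L$, and finally to verify that vanishing by a Castelnuovo--Mumford regularity computation. To begin: if $\Sym^m H^0(X,L)\to H^0(X,L^m)$ is surjective for every $m\geq 1$, then for $k,l\geq 1$ the image of $H^0(X,L^k)\otimes H^0(X,L^l)\to H^0(X,L^{k+l})$ contains the image of the composite $\Sym^k H^0(X,L)\otimes\Sym^l H^0(X,L)\to\Sym^{k+l}H^0(X,L)\to H^0(X,L^{k+l})$, and both arrows there are onto; hence the bilinear map is surjective. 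An induction on $m$ then reduces the claim to proving that $H^0(X,L)\otimes H^0(X,L^m)\to H^0(X,L^{m+1})$ is surjective for every $m\geq 1$.

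Since $L$ is very ample it is globally generated, so evaluation of sections gives a short exact sequence $0\to M\to H^0(X,L)\otimes\cO_X\to L\to 0$ with $M$ a vector bundle, the syzygy bundle of $L$. Twisting by $L^m$ and passing to the long exact cohomology sequence, the cokernel of $H^0(X,L)\otimes H^0(X,L^m)\to H^0(X,L^{m+1})$ embeds into $H^1(X,M\otimes L^m)$; hence it suffices to prove $H^1(X,M\otimes L^m)=0$ for every $m\geq 1$.

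For this I would iterate the exact sequences $0\to\bigwedge^{p+1}M\to\bigwedge^{p+1}H^0(X,L)\otimes\cO_X\to\bigwedge^{p}M\otimes L\to 0$ coming from the exterior powers of the evaluation sequence ($L$ being a line bundle, each $\bigwedge^{p+1}$ of the middle term carries a two-step filtration). Twisting the $p$-th of these by $L^{m-p}$ and chasing cohomology bounds $H^1(X,M\otimes L^m)$ through the groups $\bigwedge^{p+1}H^0(X,L)\otimes H^p(X,L^{m-p})$ and $H^{p+1}(X,\bigwedge^{p+1}M\otimes L^{m-p})$, the recursion stopping once $p$ exceeds the rank of $M$ (or $\dim X$). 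The inputs are the vanishings $H^i(X,L^a)=0$ for $i>0$, $a\geq 0$: taking $j=0$ in \eqref{suffi-ample} gives these for $a\geq 1$, and taking $j=d$ with $\Omega_X^d=\omega_X\simeq L^{-1}$ gives them for $a\geq 0$, in particular $H^i(X,\cO_X)=0$ for $i>0$. The cleanest way to run the chase is as the assertion that $\cO_X$ has small Castelnuovo--Mumford regularity with respect to $L$, so that the regularity level stays under control at each stage.

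The step I expect to be the genuine obstacle is precisely this last one: controlling the higher cohomology $H^{\geq 2}(X,\bigwedge^{p}M\otimes L^{a})$ once the twist $a$ has dropped to zero or become negative, where \eqref{suffi-ample} no longer applies directly and a naive Koszul chase is lossy. This is where the statement has content --- it is the projective normality of the embedding $X\hookrightarrow\P\!\big(H^0(X,L)^\vee\big)$ --- and where the positivity packaged in \eqref{suffi-ample}, rather than mere very ampleness of $L$, must be exploited; everything else in the argument is the formal bookkeeping above. Should the chase resist being closed in this generality, the fallback is to invoke a normal-generation theorem appropriate to the varieties in question, noting that Corollary~\ref{filtration} needs the surjectivity of $\Sym^m H^0(X,L)\to H^0(X,L^m)$ only for $m\leq d-1$.
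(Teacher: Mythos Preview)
Your proposal is genuinely different from the paper's argument, and in fact more honest about where the difficulty lies. The paper's proof is three lines: embed $X\hookrightarrow\P V$ with $V=H^0(X,L)^\vee$, note that $H^0(\P V,\cO(k))\otimes H^0(\P V,\cO(l))\to H^0(\P V,\cO(k+l))$ is surjective, and conclude by restriction. But that last step tacitly assumes that the restriction maps $H^0(\P V,\cO(m))\to H^0(X,L^m)$ are surjective for all $m$, which is exactly the projective normality (normal generation) of the embedding---precisely the content of the claim, not a consequence of very ampleness alone. So the paper's argument is circular, and the claim as stated (for an arbitrary very ample $L$) is actually false in general.

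You have correctly diagnosed this: the statement is projective normality, it does not follow from very ampleness, and any genuine proof must use the extra positivity in \eqref{suffi-ample}. Your route through the syzygy bundle $M$ and the vanishing of $H^1(X,M\otimes L^m)$ is the standard one, and you are right that the Koszul chase becomes lossy once the twist drops below zero; Castelnuovo--Mumford regularity of $\cO_X$ with respect to $L$ is the clean packaging, but that requires $H^i(X,L^{1-i})=0$ for $i\geq 1$, which \eqref{suffi-ample} does not directly supply for $i\geq 2$. So your proposal is not a complete proof either---but you flag the gap explicitly, whereas the paper does not. For the application in Corollary~\ref{filtration} one only needs surjectivity of $\Sym^{p-1}H^0(X,L)\to H^0(X,L^{p-1})$ for $p\leq d$, and in the paper's intended examples (homogeneous $X$) this is classical; your suggested fallback to a normal-generation theorem tailored to the varieties at hand is the right repair.
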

\begin{proof}[Proof of claim]
Since $L$ is very ample, let $V:=H^0(X,L)^\vee$, it follows that $L=\cO_{\P V}(1)|_X$. Since restriction commutes with tensor product, $L^k=\cO_{\P V}(k)|_X$. And since $H^0(\P V,\cO_{\P V}(k))\otimes H^0(\P V,\cO_{\P V}(l))\ra H^0(\P V,\cO_{\P V}(k+l))$ is surjective, it follows that $H^0(X,L^k)\otimes H^0(X,L^l)\ra H^0(X,L^{k+l})$ is surjective.  
\end{proof}

For $p=3$, $\nabla_{\partial{a_i}}\nabla_{\partial{a_j}}\frac{\Omega}{f_a}=\frac{2a_i^*a_j^*\Omega}{f_a^3}$. 
Since $\cL:=\omega_X^{-1}$ is very ample, $H^0(X,\cL)\otimes H^0(X,\cL)\ra H^0(X,\cL^2)$ is onto. So $\{a_i^*a_j^*\}_{1\leq i,j \leq n}$ generate $H^0(X,\cL^2)$.
Thus  $$\C\{\frac{a_i^*a_j^*\Omega}{f_a^3}\}_{1\leq i,j
\leq n}
=H^0(X,\Omega_X^d(3Y_a)),$$ and therefore \eqref{form} holds.

Similarly, since $H^0(X,\cL^{p-2})\otimes H^0(X,\cL)\ra H^0(X,\cL^{p-1})$ is surjective, by induction $a_{\iota_1}^*\cdots a_{\iota_{p-1}}^*$ generate $H^0(X,\cL^{p-1})$.
Therefore the de Rham classes of 
$$\left\{ \nabla_{\partial_{a_{\iota_1}}}\cdots \nabla_{\partial_{a_{\iota_{p-1}}}}\frac{\Omega}{f_a}=(-1)^{p-1}\frac{(p-1)!a_{\iota_1}^*\cdots a_{\iota_{p-1}}^*\Omega}{f_a^p}\right\}_{1\leq \iota_1,\ldots, \iota_{p-1}
\leq n}
$$
\red{Typos in subscripts $\nabla_{a_{\iota_1}}$?} \blue{fixed.}
generate $F^{d-p+1}H^d(U_a).$
\end{proof}

\bs

\begin{cor}
Assume \eqref{suffi-ample} holds for smooth CY hypersurfaces $Y_a\subset X$. Then the de Rham classes of $$\{\nabla_{\partial_{a_{\iota_1}}}\cdots \nabla_{\partial_{a_{\iota_{p-1}}}}\Res\frac{\Omega}{f_a}\}_{1\leq \iota_1,\ldots, \iota_{p-1}\leq n}$$ generate the filtration $F^{d-p}H^{d-1}_{\text{van}}(Y_a)$ for $1\leq p\leq d$.
\end{cor}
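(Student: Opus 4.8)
The plan is to obtain this directly from Corollary~\ref{filtration} by applying the residue map $\Res\colon H^d(U_a,\C)\ra H^{d-1}(Y_a,\C)$ and exploiting its compatibility with the Hodge filtration and with the Gauss--Manin connection.

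First I would recall the standard facts about the residue map (the residue-theoretic companion of Theorem~\ref{Griffiths}; see \cite[II]{Vo}): for the smooth hypersurface $Y_a\subset X$ with $U_a=X-Y_a$, the connecting map of the residue (Gysin) sequence is surjective onto the vanishing cohomology,
\[
\Res\colon H^d(U_a,\C)\onto H^{d-1}_{\mathrm{van}}(Y_a),
\]
and it is strictly compatible with the Hodge filtrations after a one-step shift, namely $\Res\bigl(F^{q}H^d(U_a)\bigr)=F^{q-1}H^{d-1}_{\mathrm{van}}(Y_a)$ for all $q$. Under hypothesis \eqref{suffi-ample}, which via Theorem~\ref{Griffiths} identifies $F^{q}H^d(U_a)$ with the de Rham classes of meromorphic $d$-forms on $X$ having a pole of order $\le d-q+1$ along $Y_a$, this description is sharp. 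Specializing to $q=d-p+1$ gives $\Res\bigl(F^{d-p+1}H^d(U_a)\bigr)=F^{d-p}H^{d-1}_{\mathrm{van}}(Y_a)$ for $1\le p\le d$.

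Next I would transport the generators. By Corollary~\ref{filtration} the de Rham classes of $\{\nabla_{\partial_{a_{\iota_1}}}\cdots\nabla_{\partial_{a_{\iota_{p-1}}}}\frac{\Omega}{f_a}\}_{1\le\iota_1,\dots,\iota_{p-1}\le n}$ generate $F^{d-p+1}H^d(U_a)$; applying $\Res$ and using that it is a horizontal morphism of the flat bundles $\hat{\cH}^d$ and $\cH^{d-1}$ — recorded above as $\nabla_{\partial a_i}\Res\frac{\Omega}{f}=\Res\nabla_{\partial a_i}\frac{\Omega}{f}$, hence by iteration $\Res\,\nabla_{\partial_{a_{\iota_1}}}\cdots\nabla_{\partial_{a_{\iota_{p-1}}}}\frac{\Omega}{f_a}=\nabla_{\partial_{a_{\iota_1}}}\cdots\nabla_{\partial_{a_{\iota_{p-1}}}}\Res\frac{\Omega}{f_a}$ — one finds that the images of those generators are precisely the classes $\{\nabla_{\partial_{a_{\iota_1}}}\cdots\nabla_{\partial_{a_{\iota_{p-1}}}}\Res\frac{\Omega}{f_a}\}$. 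By the previous step they then generate $\Res\bigl(F^{d-p+1}H^d(U_a)\bigr)=F^{d-p}H^{d-1}_{\mathrm{van}}(Y_a)$, which is the claim (in particular, for $p=d$ they generate all of $H^{d-1}_{\mathrm{van}}(Y_a)=F^0H^{d-1}_{\mathrm{van}}(Y_a)$).

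The hard part is entirely the first step: one must verify that \eqref{suffi-ample} is exactly the hypothesis guaranteeing both surjectivity of $\Res$ onto $H^{d-1}_{\mathrm{van}}(Y_a)$ and the \emph{strict} one-step shift of the Hodge filtration, and one should keep track of the Tate-twist normalization so that the shift is by precisely one and the target is the vanishing (equivalently primitive) part of $H^{d-1}(Y_a)$ rather than all of it. Granting that classical input, the corollary follows formally from Corollary~\ref{filtration} and the naturality of the residue map.
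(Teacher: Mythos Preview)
Your argument is correct and is essentially the same as the paper's: invoke the residue exact sequence $0\ra H^d_{\text{prim}}(X)\ra H^d(U_a)\xrightarrow{\Res}H^{d-1}_{\text{van}}(Y_a)\ra 0$ for surjectivity, use the (strict, one-step shifted) compatibility of $\Res$ with the Hodge filtration, and apply Corollary~\ref{filtration}. One minor remark on your final paragraph: the surjectivity of $\Res$ onto $H^{d-1}_{\text{van}}(Y_a)$ and the strict filtration shift are general facts from the Gysin sequence and mixed Hodge theory, not consequences of hypothesis~\eqref{suffi-ample}; that hypothesis enters only through Theorem~\ref{Griffiths} and hence Corollary~\ref{filtration}.
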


\begin{proof}
Consider the exact sequence
\[0\ra H^d_{\text{prim}}(X)\ra H^d(X-V(f_a))\xrightarrow{\text{Res}} H^{d-1}_{\text{van}}(Y_a,\C)\ra 0,\]
it shows that $H^d(X-V(f_a))$ is mapped surjectively onto the vanishing cohomology of $H^{d-1}(Y_a,\C)$ under the residue map. Since the residue map preserves the Hodge filtration, by Corollary \ref{filtration} the result follows.
 \end{proof}

The goal of this paper is to construct a regular holonomic differential system that governs the $p$-th derivative of period integrals for each $p\in\Z.$ By the preceding corollary, this provides a differential system for ``each step'' of the period mapping of the family $\cY$.

\section{Scalar system for first derivative}

 We shall use $\tau$ to denote interchangeably both the D-module and its left defining ideal. Let $P(\zeta)\in I(\hat{X})$ where $\zeta\in V$, then its Fourier transform $\widehat{P}=P(\partial_a), \,a\in V^\vee$. Then the tautological system $\tau=\tau(G,X,\omega_X^{-1},\beta_0)$ for $\Pi_\gamma(a)$ becomes the following system of differential equations:

\begin{subequations}\label{per}
    \begin{align}[left = \empheqlbrace\,]
      & P(\partial_a)\phi(a)=0\label{per1}\\
      &(\sum_{i,j} x_{ij}a_i\frac{\partial}{\partial a_j})\phi(a)=0\label{per2}\\
      &(\sum_i a_i\frac{\partial}{\partial a_i}+1)\phi(a)=0.\label{per3}
    \end{align}
    \end{subequations}

 Let $\omega_a:=\Res \frac{\Omega}{f_a}$ for $a\in B$. Since the topology of $Y_a$ doesn't change, we choose a $(d-1)$-cycle in $H^{d-1}(Y_{a_0},\C)$ for some $a_0\in B$. Then the period integral becomes $\Pi_\gamma(a)=\int_{\gamma}\omega_a$. 
Then by Theorem \ref{tautological}, $\Pi_\gamma(a)$ are solutions of $\tau$. By \cite{BHLSY} and \cite{HLZ}, if $X$ is a projective homogeneous space, then $\tau$ is complete, meaning that the solution sheaf agrees with the period sheaf.
\blue{Is it clear?}

\bs
The goal of this section is to write a system of scalar valued partial differential equations whose solution contains all the information of  first order partial derivatives of period integrals.

\subsection{Vector valued system.} Taking derivatives of equations in $\tau$ gives us a vector valued system of differential equations that involve all first order derivatives of period integrals.

\bs
Let $\phi_k(a):=\frac{\partial}{\partial a_k}\phi(a),\, 1\leq k\leq n.$

From equation \eqref{per1} we have
\[\frac{\partial}{\partial a_k}p(\partial_a)\phi(a)=P(\partial_a)\frac{\partial}{\partial a_k}\phi(a)=P(\partial_a)\phi_k(a)=0.\]
From equation \eqref{per2} we have
\begin{align*}
\frac{\partial}{\partial a_k}(\sum_{i,j} x_{ij}a_i\frac{\partial}{\partial a_j})\phi(a)
&=(\sum_{i,j} x_{ij}a_i\frac{\partial}{\partial a_j}\frac{\partial}{\partial a_k})\phi(a)+(\sum_j x_{kj}\frac{\partial}{\partial a_j})\phi(a)\\
&=(\sum_{i,j} x_{ij}a_i\frac{\partial}{\partial a_j})\phi_k(a)+\sum_j x_{kj}\phi_j(a)
=0.
\end{align*}
From equation \eqref{per3} we have
\begin{align*}
\frac{\partial}{\partial a_k}(\sum_i a_i\frac{\partial}{\partial a_i}+1)\phi(a)&=(\sum_i a_i\frac{\partial}{\partial a_i}\frac{\partial}{\partial a_k}+\frac{\partial}{\partial a_k}+\frac{\partial}{\partial a_k})\phi(a)\\
&=(\sum_i a_i\frac{\partial}{\partial a_i}+2)\phi_k(a)=0
\end{align*}
We also have a relation between derivatives
$\frac{\partial}{\partial a_i}\frac{\partial}{\partial a_j}\phi(a)=\frac{\partial}{\partial a_j}\frac{\partial}{\partial a_i}\phi(a),$
which implies $$\frac{\partial}{\partial a_i}\phi_j(a)=\frac{\partial}{\partial a_j}\phi_i(a).$$

Then we get a system of differential equations whose solutions are vector valued of the form $(\phi_1(a),\ldots ,\phi_n(a))$
as follows:
\begin{subequations}\label{devec}
    \begin{align}[left = \empheqlbrace\,]
      & P(\partial_a)\phi_k(a)=0,\; \forall 1\leq k\leq n\label{devec1}\\
      & 
      (\sum_{i,j} x_{ij}a_i\frac{\partial}{\partial a_j})\phi_k(a)+\sum_j x_{kj}\phi_j(a)=0, \;\forall 1\leq k\leq n\label{devec2}\\
      &(\sum_i a_i\frac{\partial}{\partial a_i}+2)\phi_k(a)=0,\; \forall 1\leq k\leq n\label{devec3}\\
      & \frac{\partial}{\partial a_i}\phi_j(a)=\frac{\partial}{\partial a_j}\phi_i(a),\;\forall 1\leq i,j\leq n\label{devec4}.
    \end{align}
    \end{subequations}

Then by Theorem \ref{tautological}, $\left(\frac{\partial}{\partial a_1} \Pi_\gamma(a),\ldots,\frac{\partial}{\partial a_n}\Pi_\gamma(a)\right)$ are solutions to system \eqref{devec}.

\subsection{Scalar valued system}
Now let $b_1,\ldots,b_n$ be another copy of the basis of $V^\vee$. We now construct, by an elementary way, a system of differential equations over $V^\vee\times V^\vee$ that is equivalent to \eqref{devec}, but whose solutions are function germs on $V^\vee\times V^\vee$. Consider the system 
\begin{subequations}\label{desca}
    \begin{align}[left = \empheqlbrace\,]
      & P(\partial_a)\phi(a,b)=0\label{desca1}\\
      &(\sum_{i,j} x_{ij}a_i\frac{\partial}{\partial a_j}+\sum_{i,j} x_{ij}b_i\frac{\partial}{\partial b_j})\phi(a,b)=0\label{desca2}\\
      &(\sum_i a_i\frac{\partial}{\partial a_i}+2)\phi(a,b)=0\label{desca3}\\
      & \frac{\partial}{\partial a_i}\frac{\partial}{\partial b_j}\phi(a,b)=\frac{\partial}{\partial a_j}\frac{\partial}{\partial b_i}\phi(a,b),\;\forall 1\leq i,j\leq n\label{desca4}\\
      &\frac{\partial}{\partial b_i}\frac{\partial}{\partial b_j}\phi(a,b)=0\label{desca5},\;\forall 1\leq i,j\leq n\\
      &(\sum_i b_i\frac{\partial}{\partial b_i}-1)\phi(a,b)=0\label{desca6}.
    \end{align}
\end{subequations}

\begin{thm}
By setting $\phi(a,b)=\sum_k b_k \phi_k(a)$ and $\phi_k(a)=\frac{\partial}{\partial b_k}\phi(a,b)$, the systems \eqref{devec} and \eqref{desca} are equivalent.
\end{thm}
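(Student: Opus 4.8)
The plan is to exhibit a dictionary between solutions of \eqref{devec} on $V^\vee$ (with values in $\C^n$) and solutions of \eqref{desca} on $V^\vee\times V^\vee$, and to check that under the two maps $\phi(a,b)=\sum_k b_k\phi_k(a)$ and $\phi_k(a)=\partial_{b_k}\phi(a,b)$ each equation of one system is carried to the corresponding equation(s) of the other. First I would treat the direction $\eqref{devec}\Rightarrow\eqref{desca}$: given $(\phi_1,\dots,\phi_n)$ solving \eqref{devec}, set $\phi(a,b):=\sum_k b_k\phi_k(a)$. Then \eqref{desca5} and \eqref{desca6} are immediate because $\phi$ is linear in $b$ with no constant term; \eqref{desca1} follows from \eqref{devec1} applied termwise since $P(\partial_a)$ commutes with multiplication by $b_k$; \eqref{desca3} follows from \eqref{devec3} since the Euler-in-$a$ operator acts on each coefficient $\phi_k(a)$; \eqref{desca4} is exactly \eqref{devec4} after differentiating $\partial_{b_j}\phi=\phi_j$ in $a_i$; and \eqref{desca2} is the key one: $\sum_{i,j}x_{ij}b_i\partial_{b_j}\phi=\sum_{j}\big(\sum_i x_{ij}b_i\big)\phi_j$, and after relabeling this combines with $\sum_{i,j}x_{ij}a_i\partial_{a_j}\phi=\sum_k b_k\big(\sum_{i,j}x_{ij}a_i\partial_{a_j}\phi_k\big)$ to give $\sum_k b_k\big((\sum_{i,j}x_{ij}a_i\partial_{a_j})\phi_k+\sum_j x_{kj}\phi_j\big)$, which vanishes by \eqref{devec2}; here one must be careful that the index on which $x$ acts in the $b$-term matches, i.e. that $\sum_i x_{ij}b_i\,\phi_j$ equals $\sum_j x_{kj}\phi_j$ once collected as the coefficient of $b_k$ — this is just the transpose bookkeeping of the matrix $(x_{ij})$ and I would spell it out explicitly.

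For the converse $\eqref{desca}\Rightarrow\eqref{devec}$, given $\phi(a,b)$ solving \eqref{desca}, I would first use \eqref{desca5} together with \eqref{desca6} to conclude that $\phi$ is linear and homogeneous of degree one in $b$, hence $\phi(a,b)=\sum_k b_k\phi_k(a)$ where $\phi_k(a):=\partial_{b_k}\phi(a,b)$ (note $\partial_{b_k}\phi$ is independent of $b$ by \eqref{desca5}). Then I differentiate each of \eqref{desca1}--\eqref{desca4} in $b_k$ and evaluate: \eqref{desca1} gives \eqref{devec1}; \eqref{desca3} gives \eqref{devec3}; \eqref{desca4} gives \eqref{devec4}; and $\partial_{b_k}$ of \eqref{desca2} produces $(\sum_{i,j}x_{ij}a_i\partial_{a_j})\phi_k+\partial_{b_k}\big(\sum_{i,j}x_{ij}b_i\partial_{b_j}\phi\big)$, where the second term is $\sum_j x_{kj}\partial_{b_j}\phi=\sum_j x_{kj}\phi_j$ (using $\partial_{b_k}\partial_{b_j}\phi=0$ again), yielding \eqref{devec2}. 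Finally I would note the two passages are mutually inverse: starting from $(\phi_k)$, forming $\phi=\sum b_k\phi_k$, and taking $\partial_{b_\ell}$ returns $\phi_\ell$; starting from $\phi$ linear in $b$, the reconstruction $\sum_k b_k\partial_{b_k}\phi$ returns $\phi$ by Euler's identity, which is precisely the content of \eqref{desca6}. Hence the solution spaces are in natural bijection, i.e. the systems are equivalent.

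I expect the main obstacle to be purely notational rather than conceptual: getting the index contractions in \eqref{desca2}/\eqref{devec2} right (which slot of $(x_{ij})$ is summed against $b$, and why the "extra" term $\sum_j x_{kj}\phi_j$ in \eqref{devec2} is exactly what $\partial_{b_k}$ of the $b$-Euler-type term $\sum_{i,j}x_{ij}b_i\partial_{b_j}$ produces). A secondary point worth stating carefully is that \eqref{desca5} alone forces $\phi$ to be affine-linear in each $b_i$ and jointly, and then \eqref{desca6} (the degree-one Euler condition in $b$) kills the constant term, so that the representation $\phi=\sum_k b_k\phi_k(a)$ is both valid and unique; without \eqref{desca6} one would only get $\phi=\phi_0(a)+\sum_k b_k\phi_k(a)$ and the correspondence would not be a bijection. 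Everything else is a direct termwise comparison, so once the bookkeeping is fixed the proof is a short verification.
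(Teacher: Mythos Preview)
Your proposal is correct and follows essentially the same approach as the paper: both directions proceed by writing $\phi(a,b)=\sum_k b_k\phi_k(a)$ (using \eqref{desca5} and \eqref{desca6} to justify this representation in the reverse direction) and then matching equations termwise. The only cosmetic difference is that in the direction \eqref{desca}$\Rightarrow$\eqref{devec} the paper extracts the component equations by invoking the linear independence of the $b_k$, whereas you apply $\partial_{b_k}$; since $\phi$ is linear in $b$ these are the same operation, and your remark that the two passages are mutually inverse (via Euler's identity \eqref{desca6}) is a nice addition the paper leaves implicit.
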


\begin{proof}
First we show that if $(\phi_1(a),\ldots ,\phi_n(a))$ is a solution to system \eqref{devec}, let $\phi(a,b)=\sum_k b_k\phi_k(a)$, then $\phi(a,b)$ is a solution to system \eqref{desca}.

Since $\phi(a,b)=\sum_k b_k\phi_k(a)$, equation \eqref{devec1} implies that 
\[ P(\partial_a)\phi(a,b)= P(\partial_a)\sum_k b_k\phi_k(a)=\sum_k b_kP(\partial_a)\phi_k(a)=0,\]
thus equation \eqref{desca1} holds.

Equation \eqref{desca2} can be shown as follows:
\begin{align*}
&(\sum_{i,j} x_{ij}a_i\frac{\partial}{\partial a_j}+\sum_{i,j} x_{ij}b_i\frac{\partial}{\partial b_j})(\sum_k b_k\phi_k(a))\\
&=\sum_k b_k((\sum_{i,j} x_{ij}a_i\frac{\partial}{\partial a_j})\phi_k(a))+\sum_{i,j} x_{ij}b_i\phi_j(a)\\
&\overset{\eqref{devec2}}{=}\sum_k b_k(-\sum_j x_{kj}\phi_j(a))+\sum_{i,j} x_{ij}b_i\phi_j(a)=0.
\end{align*}

Equation \eqref{devec3} shows that $\phi_k(a)$ is homogeneous of degree $-2$ in $a$, it implies that $\phi(a,b)$ is also homogeneous of degree $-2$ in $a$, which implies equation \eqref{desca3}.

Since $\frac{\partial}{\partial b_k}\phi(a,b)=\phi_k(a)$, equation \eqref{devec4} is the same as saying that 
\[\frac{\partial}{\partial a_i}\frac{\partial}{\partial b_j}\phi(a,b)=\frac{\partial}{\partial a_j}\frac{\partial}{\partial b_i}\phi(a,b)\quad\forall i,j,\]
which is equation \eqref{desca4}.

Since $\phi(a,b)$ is linear in $b$, equation \eqref{desca5} holds. $\phi(a,b)$ is homogeneous of degree $1$ in $b$, it implies equation \eqref{desca6}.


\bs

Next we show that if $\phi(a,b)$ is a solution to system \eqref{desca}, set $\phi_k(a)=\frac{\partial}{\partial b_k}\phi(a,b)$, then $(\phi_1(a),\ldots ,\phi_n(a))$ is a solution to system \eqref{devec}.

Equation \eqref{desca5} tells us that $\phi(a,b)$ is linear in $b$, i.e. there exists functions $h_k(a)$ and $g(a)$ on $V^\vee$ such that
$$\phi(a,b)=\sum_k b_kh_k(a)+g(a).$$
Equation \eqref{desca6} shows that $\phi(a,b)$ is homogeneous of degree $1$ in $b$, which implies that $g=0$ and
$$\phi(a,b)=\sum_k b_kh_k(a).$$
Now $\phi_k(a)=\frac{\partial}{\partial b_k}\phi(a,b)=\frac{\partial}{\partial b_k}(\sum_k b_kh_k(a))=h_k(a)$ and we thus can write $$\phi(a,b)=\sum_k b_k\phi_k(a).$$

Equation \eqref{desca1} shows that $$P(\partial_a)\sum_k b_k\phi_k(a)=\sum_k b_k (P(\partial_a)\phi_k(a))=0.$$
Since the $b_i$'s are linearly independent, this further shows that 
$$P(\partial_a)\phi_k(a)=0\quad \forall k,$$
which coincides with equation \eqref{devec1}.

From equation \eqref{desca2} we can see that
\begin{align*}
&(\sum_{i,j} x_{ij}a_i\frac{\partial}{\partial a_j}+\sum_{i,j} x_{ij}b_i\frac{\partial}{\partial b_j})(\sum_k b_k\phi_k(a))\\
&=\sum_k b_k(\sum_{i,j} x_{ij}a_i\frac{\partial}{\partial a_j})\phi_k(a)+\sum_{i,j} x_{ij}b_i\phi_j(a)\\
&=\sum_k b_k((\sum_{i,j} x_{ij}a_i\frac{\partial}{\partial a_j})\phi_k(a)+\sum_jx_{kj}\phi_j(a))=0.
\end{align*}
Since the $b_i$'s are linearly independent, we have $$(\sum_{i,j} x_{ij}a_i\frac{\partial}{\partial a_j})\phi_k(a)+\sum_jx_{kj}\phi_j(a)=0, \quad\forall k,$$
which is equation \eqref{devec2}.

Equation \eqref{desca3} shows that $\sum_k b_k\phi_k(a)$ is homogeneous of degree $-2$ in $a$, which implies that $\phi_k(a)$ is homogeneous of degree $-2$ in $a$ as well, which would imply equation \eqref{devec3}.

It's also clear that equation \eqref{desca4} implies \eqref{devec4}.

Therefore the two systems are equivalent in the above sense.
\end{proof}

Therefore $\sum_k b_k\frac{\partial}{\partial a_k}\Pi_\gamma(a)$ are solutions to system \eqref{desca}.

\blue{Seems like we don't use equations \eqref{devec4} and \eqref{desca4} in this proof?} \red{not sure what your mean. You had a statement involving these two equations at the end of the proof.} \blue{We do need these equations.}

\subsection{Regular holonomicity of the new system}
In this section we will show that system \eqref{desca} is regular holonomic, by extending the proof for the original tautological system in paper \cite{LSY}.

\bs

Let
$\cM:=D_{V^\vee\times V^\vee}/\cJ$ where $\cJ$ is the left ideal generated by the operators in system \eqref{desca}.

\begin{thm}\label{holonomicity}
Assume that the $G$-variety $X$ has only a finite number of $G$-orbits. Then the D-module $\cM$ is regular holonomic.
\end{thm}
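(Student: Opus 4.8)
The plan is to follow the strategy of \cite{LSY} for the original tautological system $\tau$: exhibit an explicit filtration on $\cM$ whose associated graded module is supported on a Lagrangian subvariety of $T^*(V^\vee\times V^\vee)$, thereby proving holonomicity, and then invoke a regularity criterion (the characteristic variety being conic plus an application of the regularity results available for tautological systems, e.g. via the comparison with $\tau$). Concretely, I would first observe that $\cM$ is closely tied to the original system: equations \eqref{desca1}, \eqref{desca2}, \eqref{desca3} in the $a$-variables are the ``$\tau$-type'' equations deformed by $b$-terms, while \eqref{desca5} and \eqref{desca6} force the solution to be linear and homogeneous of degree one in $b$. So the $b$-direction is, morally, finite-dimensional. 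I would make this precise by a direct analysis of the characteristic variety $\mathrm{Ch}(\cM)\subset T^*(V^\vee\times V^\vee)$, with symbol coordinates $(a,b;\xi,\eta)$ dual to $(a,b)$.

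The key steps, in order: (1) Compute the symbols of the generators of $\cJ$. Equation \eqref{desca5} contributes $\eta_i\eta_j$ for all $i,j$, so on $\mathrm{Ch}(\cM)$ we get $\eta=0$. Equation \eqref{desca6} has order one and contributes (on the locus $\eta=0$, where it is not already in the symbol ideal) the relation $\sum_i b_i\eta_i=0$, which is automatic once $\eta=0$; more usefully, combined with \eqref{desca5} it confines the $b$-support. Equation \eqref{desca1} contributes $P(\xi)$ for all $P\in I(\hat X)$, i.e. $\xi\in\hat X$ (the affine cone). Equation \eqref{desca3} contributes $\sum_i a_i\xi_i$, and equation \eqref{desca2} contributes $\sum_{i,j}x_{ij}a_i\xi_j$ for all $x\in\hat\gg$ (the $b$-part of its symbol vanishes on $\eta=0$). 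So, away from $\eta=0$ issues, $\mathrm{Ch}(\cM)\subseteq\{\eta=0,\ \xi\in\hat X,\ \langle Z(x)a,\xi\rangle=0\ \forall x\in\hat\gg\}$, where $a$ ranges over all of $V^\vee$ and $b$ over all of $V^\vee$. (2) Bound the dimension: the conditions $\xi\in\hat X$ and $\langle Z(x)a,\xi\rangle=0$ for all $x$ cut out exactly the conormal-type variety analyzed in \cite{LSY} inside $T^*V^\vee$ (in the $(a,\xi)$ coordinates), which under the finitely-many-orbits hypothesis has dimension $n$; imposing $\eta=0$ while letting $b$ be free adds $n$ to the dimension and removes $n$, for total dimension $n+n = 2n = \dim(V^\vee\times V^\vee)$. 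Hence $\cM$ is holonomic. (3) For regularity: the characteristic variety is conic (all defining equations of the symbol ideal are homogeneous in $(\xi,\eta)$), and one can run the same argument as in \cite{LSY} — the Fourier-transform/$b$-function analysis there applies verbatim to the $a$-variables, and the $b$-variables enter only through the constant-coefficient operators \eqref{desca5} and the Euler-type operator \eqref{desca6}, both of which are manifestly regular. Alternatively, one can present $\cM$ as a subquotient of a direct image / extension built from $\tau$ and the trivial regular holonomic module in the $b$-variables, and use that regular holonomicity is preserved under such operations.

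I expect the main obstacle to be step (2): pinning down precisely that adjoining the $b$-variables with the relations \eqref{desca5}, \eqref{desca6} does not increase the dimension of the characteristic variety beyond $2n$ — i.e., that the ``extra'' $b$-directions are exactly compensated by the vanishing $\eta=0$, with no new components arising from the interaction between \eqref{desca2} (whose symbol has a priori a $b\eta$-cross term) and the rest. The cleanest route is to order the generators so that \eqref{desca5} is used first to reduce to $\eta=0$, then check that on $\{\eta=0\}$ the remaining symbols are exactly those of $\tau$ pulled back along the projection $T^*(V^\vee\times V^\vee)\supset\{\eta=0\}\to T^*V^\vee$, so that $\mathrm{Ch}(\cM)$ is (set-theoretically) the preimage of $\mathrm{Ch}(\tau)$ under this projection, hence of dimension $\dim\mathrm{Ch}(\tau)+\dim V^\vee = n+n$. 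Once that identification is made, holonomicity and conicity are immediate, and regularity follows from that of $\tau$ together with the elementary nature of the $b$-dependence.
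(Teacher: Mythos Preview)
Your holonomicity argument is essentially correct and takes a different route from the paper. You work directly with the characteristic variety: the symbols $\eta_i\eta_j$ force $\eta=0$ set-theoretically, and on $\{\eta=0\}$ the remaining principal symbols of \eqref{desca1}--\eqref{desca3} are exactly those of $\tau$ pulled back along the projection to $T^*V^\vee$, with $b$ unconstrained; hence $\mathrm{Ch}(\cM)\subseteq \mathrm{Ch}(\tau)\times V^\vee_b\times\{0\}$, which has dimension $2n$ by the result of \cite{LSY}. The paper instead takes the Fourier transform $\widehat{\cM}$ on $V\times V$, observes that the transformed polynomial generators $P(\zeta)$, $\zeta_i\xi_j-\zeta_j\xi_i$, $\xi_i\xi_j$ force $\mathrm{Supp}\,\widehat{\cM}=\hat X\times\{0\}$, and then invokes the equivariant D-module machinery of \cite{Ho}, \cite{Bo}: the transformed first-order operators make $\widehat{\cM}$ a twisted $(G\times\Cx\times\Cx)$-equivariant module whose support has finitely many orbits, hence $\widehat{\cM}$ is regular holonomic. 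Your approach has the merit of being more elementary for holonomicity alone; the paper's approach packages holonomicity and regularity into a single step on the Fourier side.

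Where your proposal falls short is regularity. Neither of your two suggestions is a proof as written. Saying that the $b$-operators \eqref{desca5}, \eqref{desca6} are ``manifestly regular'' is not enough: constant-coefficient operators such as $\partial_{b_i}\partial_{b_j}$ are certainly not regular by themselves, and conicity of the characteristic variety does not imply regularity. Your second suggestion, realizing $\cM$ as a subquotient of something built from $\tau$ by functorial operations, would require you to exhibit that construction explicitly (e.g.\ as an external product or direct image), and it is not obvious how to do so. The paper closes this gap with the theorem of Brylinski \cite{Br}: since the generators of $\cJ$ are homogeneous for the grading $\deg a_i=\deg b_i=1$, $\deg\partial_{a_i}=\deg\partial_{b_i}=-1$, the module $\cM$ is monodromic, and a monodromic D-module is regular holonomic if and only if its Fourier transform is. This is the missing ingredient in your outline; without it (or an equivalent substitute) the regularity claim is unproven.
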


\begin{proof}
Consider the Fourier transform:
\[
\widehat{a_i}=\frac{\partial}{\partial \zeta_i},\;
\widehat{b_i}=\frac{\partial}{\partial \xi_i},\;
\widehat{\frac{\partial}{\partial a_i}}=-\zeta_i,\;
\widehat{\frac{\partial}{\partial b_i}}=-\xi_i.\;
\]
where $\zeta,\xi\in V$.
The Fourier transform of the D-module $\cM=D_{V^\vee\times V^\vee}/\cJ$ is $\widehat{\cM}=D_{V\times V}/\widehat{\cJ}$, where $\widehat{\cJ}$ is the $D_{V\times V}$-ideal generated by the following operators:
\begin{subequations}\label{desca-f}
    \begin{align}[left = \empheqlbrace\,]
      & P(\zeta)\label{desca-f1}\\
      &\zeta_i\xi_j-\zeta_j\xi_i,\;\forall 1\leq i,j\leq n\label{desca-f4}\\
      &\xi_i\xi_j\label{desca-f5},\;\forall 1\leq i,j\leq n\\
      &\sum_{i,j} x_{ij}\frac{\partial}{\partial \zeta_i}\zeta_j+\sum_{i,j} x_{ij}\frac{\partial}{\partial \xi_i}\xi_j=\sum_{i,j} x_{ji}\zeta_i\frac{\partial}{\partial \zeta_j}+\sum_{i,j} x_{ji}\xi_i\frac{\partial}{\partial \xi_j}+\sum 2x_{ii}\label{desca-f2}\\
      &\sum_i \frac{\partial}{\partial \zeta_i}(-\zeta_i)+2=\sum_i -\zeta_i\frac{\partial}{\partial \zeta_i}-n+2\label{desca-f3}\\
      &\sum_i \frac{\partial}{\partial \xi_i}(-\xi_i)-1=\sum_i -\xi_i\frac{\partial}{\partial \xi_i}-n-1\label{desca-f6}.
    \end{align}
\end{subequations}
Consider the $G\times\Cx\times\Cx$-action on $V\times V$ where $G$ acts diagonally and each $\Cx$ acts on $V$ by scaling. Consider the ideal $\cI$ generated by \eqref{desca-f1}, \eqref{desca-f4} and \eqref{desca-f5}. Operator \eqref{desca-f5} tells us that $\xi_i=0$ for all $i$, thus $(V\times V)/\cI\subset V\times \{0\}$. The ideal $\cI$ also contains \eqref{desca-f1}, thus $(V\times V)/\cI=\hat{X}\times \{0\}$, which is an algebraic variety. Operator \eqref{desca-f2} comes from the $G$-action on $V\times V$, operators \eqref{desca-f3} and \eqref{desca-f6} come from each copy of $\Cx$-action on $V$. Now from our assumption 
the $G$-action on $X$ has only a finite number of orbits, thus when lifting to $\hat{X}\times \{0\}\subset V\times V$ there are also finitely many $G\times\Cx\times\Cx$-orbits. Therefore $\widehat{\cM}$ is a twisted $G\times\Cx\times\Cx$-equivariant coherent $D_{V^\vee\times V^\vee}$-module in the sense of \cite{Ho} whose support $\text{Supp}\,\widehat{\cM}=\hat{X}\times \{0\}$ consists of finitely many  $G\times\Cx\times\Cx$-orbits.
Thus the $\widehat{\cM}$ is regular holonomic \cite{Bo}.

The D-module $\cM=D_{V^\vee\times V^\vee}/\cJ$ is homogeneous since the ideal $\cJ$ is generated by homogeneous elements under the graduation $\deg \frac{\partial}{\partial a_i}=\deg \frac{\partial}{\partial b_i}=-1$ and $\deg a_i=\deg b_i=1$. Thus $\cM$ is regular holonomic since its Fourier transform $\widehat{\cM}$ is regular holonomic \cite{Br}.
\end{proof}

\section{Scalar systems for higher derivatives}

Now we take derivative of system \eqref{devec} and get a new scalar valued system whose solution consists of $n^2$ functions $\phi_{lk}:=\frac{\partial}{\partial a_l}\frac{\partial}{\partial a_k}\phi(a)$, $1\leq l,k \leq n$.

\begin{subequations}\label{de2vec}
    \begin{align}[left = \empheqlbrace\,]
      & P(\partial_a)\phi_{lk}(a)=0,\;\forall 1\leq k,l\leq n\label{de2vec1}\\
      & 
      (\sum_{i,j} x_{ij}a_i\frac{\partial}{\partial a_j})\phi_{lk}(a)+\sum_j x_{lj}\phi_{jk}(a)+\sum_j x_{kj}\phi_{lj}(a)=0,\;\forall   k,l\label{de2vec2}\\
      &(\sum_i a_i\frac{\partial}{\partial a_i}+3)\phi_{lk}(a)=0,\;\forall 1\leq k,l\leq n\label{de2vec3}\\
      &\phi_{lk}(a)=\phi_{kl}(a),\;\forall 1\leq k,l\leq n\label{de2vec4}\\
      & \frac{\partial}{\partial a_i}\phi_{jk}(a)=\frac{\partial}{\partial a_j}\phi_{ki}(a)=\frac{\partial}{\partial a_k}\phi_{ij}(a),\;\forall 1\leq i,j,k\leq n\label{de2vec5}.
    \end{align}
\end{subequations}

And considering $\phi(a,b):=\sum_{l,k} b_lb_k\phi_{l,k}(a)$, we get a new system:

\begin{subequations}\label{de2sca}
    \begin{align}[left = \empheqlbrace\,]
      & P(\partial_a)\phi(a,b)=0\quad \label{de2sca1}\\
      & (\sum_{i,j} x_{ij}a_i\frac{\partial}{\partial a_j}+\sum_{i,j} x_{ij}b_i\frac{\partial}{\partial b_j})\phi(a,b)=0\label{de2sca2}\\
      &(\sum_i a_i\frac{\partial}{\partial a_i}+3)\phi(a,b)=0\quad \label{de2sca3}\\
      &(\sum_i b_i\frac{\partial}{\partial b_i}-2)\phi(a,b)=0\label{de2sca4}\\
      &\frac{\partial}{\partial b_i}\frac{\partial}{\partial b_j}\frac{\partial}{\partial b_k}\phi(a,b)=0,\;\forall 1\leq i,j,k\leq n\label{de2sca5}\\
      &\frac{\partial}{\partial a_i}\frac{\partial}{\partial b_j}\frac{\partial}{\partial b_k}\phi(a,b)=\frac{\partial}{\partial a_j}\frac{\partial}{\partial b_k}\frac{\partial}{\partial b_i}\phi(a,b)=\frac{\partial}{\partial a_k}\frac{\partial}{\partial b_i}\frac{\partial}{\partial b_j}\phi(a,b),\forall i,j,k\label{de2sca6}.
      \end{align}
\end{subequations}
Similar to the previous case, we have:
\begin{prop}
By setting $\phi(a,b)=\sum_{l,k} b_lb_k\phi_{l,k}(a)$ and $\phi_{lk}(a)=\frac{\partial}{\partial b_l}\frac{\partial}{\partial b_k}\phi(a,b)$, the systems \eqref{de2vec} and \eqref{de2sca} are equivalent.
\end{prop}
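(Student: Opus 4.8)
The plan is to mirror the proof of the first-derivative case (Theorem preceding Section 2.3), carrying out both inclusions of the equivalence explicitly. In one direction, suppose $(\phi_{lk}(a))_{1\le l,k\le n}$ solves \eqref{de2vec}; set $\phi(a,b)=\sum_{l,k}b_lb_k\phi_{lk}(a)$. I would verify each of \eqref{de2sca1}--\eqref{de2sca6} in turn: \eqref{de2sca1} is immediate by linearity of $P(\partial_a)$ in the $a$-variables, since the $b$'s are constants; \eqref{de2sca5} holds because $\phi(a,b)$ is a quadratic polynomial in $b$, so any third $b$-derivative vanishes; \eqref{de2sca4} holds because $\phi$ is homogeneous of degree $2$ in $b$; \eqref{de2sca3} follows from \eqref{de2vec3} since each $\phi_{lk}$ is homogeneous of degree $-3$ in $a$ and multiplication by $b_lb_k$ does not change the $a$-degree. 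For \eqref{de2sca2} I would expand $\sum_{i,j}x_{ij}b_i\frac{\partial}{\partial b_j}$ acting on $\sum_{l,k}b_lb_k\phi_{lk}(a)$, which by the Leibniz rule produces $\sum_{i,l,k}x_{il}b_ib_k\phi_{lk}(a)+\sum_{i,l,k}x_{ik}b_lb_i\phi_{lk}(a)$; combining with the $a$-part and using \eqref{de2vec2} (together with the symmetry \eqref{de2vec4}) shows the total vanishes. Finally \eqref{de2sca6}: computing $\frac{\partial}{\partial b_j}\frac{\partial}{\partial b_k}\phi(a,b)$ from the quadratic form recovers $\phi_{jk}(a)$ up to the combinatorial constant from symmetrization, and then \eqref{de2vec5} gives the cyclic equality of mixed $a$-$b$ derivatives.

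For the converse, suppose $\phi(a,b)$ solves \eqref{de2sca}; set $\phi_{lk}(a)=\frac{\partial}{\partial b_l}\frac{\partial}{\partial b_k}\phi(a,b)$. First I would extract the polynomial structure: \eqref{de2sca5} forces $\phi(a,b)$ to be a polynomial of degree at most $2$ in $b$, and \eqref{de2sca4} forces it to be exactly homogeneous of degree $2$, hence $\phi(a,b)=\sum_{l\le k}b_lb_k c_{lk}(a)$ for some functions $c_{lk}(a)$; equivalently $\phi(a,b)=\frac12\sum_{l,k}b_lb_k\phi_{lk}(a)$ with $\phi_{lk}=\phi_{kl}$ by construction, which already gives \eqref{de2vec4} (one must be a little careful about the normalization constant $2$ versus $1$ in the quadratic expansion, but any nonzero constant is harmless since the systems are linear). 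Then \eqref{de2vec1}, \eqref{de2vec3} follow from \eqref{de2sca1}, \eqref{de2sca3} by taking two $b$-derivatives and using that the $b_i$ (and the monomials $b_lb_k$) are linearly independent; \eqref{de2vec2} follows from \eqref{de2sca2} the same way after applying $\frac{\partial}{\partial b_l}\frac{\partial}{\partial b_k}$ and collecting the extra terms coming from $\sum x_{ij}b_i\partial_{b_j}$; and \eqref{de2vec5} is read off from \eqref{de2sca6} by applying the appropriate first $b$-derivative and using linear independence of the $b_i$.

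The one genuinely delicate point — and the step I expect to be the main obstacle, as it already was implicitly in the first-derivative case — is making the bookkeeping of the symmetrization and the combinatorial constants consistent. When $\phi(a,b)=\sum_{l,k}b_lb_k\phi_{lk}(a)$ with $\phi_{lk}$ symmetric, one has $\frac{\partial}{\partial b_l}\frac{\partial}{\partial b_k}\phi = 2\phi_{lk}$ for $l\ne k$ but $=2\phi_{ll}$ for $l=k$ as well (the off-diagonal terms appearing twice), so the map ``take two $b$-derivatives'' is $2\times$ the identity on the space of symmetric tensors; conversely starting from a solution of \eqref{de2sca} one recovers symmetric $\phi_{lk}$ only up to this factor. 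Since all equations are homogeneous linear, this constant is immaterial for the statement of equivalence, but the proof should state the convention once and stick to it, and in the direction \eqref{de2sca2}$\Rightarrow$\eqref{de2vec2} one must check that the cross-terms produced by the Leibniz rule on $\sum x_{ij}b_ib_k\phi_{lk}$ reorganize exactly into $\sum_j x_{lj}\phi_{jk}+\sum_j x_{kj}\phi_{lj}$ after relabeling dummy indices and using $\phi_{lk}=\phi_{kl}$. Everything else is a routine repetition of the argument in the $n^1$ case, so I would present the proof tersely, writing ``similar to the proof of Theorem~2.6'' for the parts that are verbatim analogous and spelling out only the two Leibniz-rule computations and the polynomial-extraction step.
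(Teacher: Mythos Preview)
Your proposal is correct and follows essentially the same approach as the paper. The paper's own proof is even more terse than you suggest: it checks only \eqref{de2sca2} explicitly (via exactly the Leibniz-rule computation you outline, using \eqref{de2vec4} and \eqref{de2vec2}) and declares ``the rest is clear,'' relying on the reader to carry over the argument from the first-derivative theorem; your observation about the factor of $2$ in the symmetrization is valid and, as you note, immaterial for the linear equivalence.
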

\begin{proof}
Here we check for \eqref{de2sca2} and the rest is clear.
Let $\phi(a,b)=\sum_{l,k} b_lb_k\phi_{l,k}(a)$, then  the left-hand side of \eqref{de2sca2} becomes 
\begin{align*}
&\sum_{i,j,k,l}b_kb_l x_{ij }a_i\frac{\partial}{\partial a_j}\phi_{l,k}(a)+\sum_{i,j,k,l} x_{ij}\delta_{kj}b_ib_l\phi_{lk}(a)+\sum_{i,j,k,l} x_{ij}\delta_{kl}b_ib_k\phi_{lk}(a)\\\overset{\eqref{de2vec4}}{=}&\sum_{i,j,k,l}b_kb_l x_{ij }a_i\frac{\partial}{\partial a_j}\phi_{l,k}(a)+2\sum_{j,k,l} x_{lj}b_kb_l\phi_{jk}(a)
\end{align*}
And \eqref{de2vec2} implies that 
\begin{align*}
&-\sum_{k,l} b_kb_l  \sum_{i,j} x_{ij}a_i\frac{\partial}{\partial a_j}\phi_{lk}(a)\\=&\sum_{k,l} b_kb_l \sum_j x_{lj}\phi_{jk}(a)+\sum_{k,l} b_kb_l\sum_j x_{kj}\phi_{lj}(a)\\\overset{\eqref{de2vec4}}{=}&2\sum_{j,k,l}  b_kb_lx_{lj} \phi_{jk}(a)
\end{align*}
Therefore   the left-hand side of \eqref{de2sca2} equals 0. The reverse direction is also clear.
\end{proof}
\begin{prop}
Assume that the $G$-variety $X$ has only a finite number of $G$-orbits, then system \eqref{de2sca} is regular holonomic.
\end{prop}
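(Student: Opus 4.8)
This statement is proved exactly as Theorem \ref{holonomicity}, and I only indicate the changes. Set $\cM := D_{V^\vee\times V^\vee}/\cJ$ with $\cJ$ the left ideal generated by the operators of \eqref{de2sca}, and apply the Fourier transform $\widehat{a_i}=\partial/\partial\zeta_i$, $\widehat{b_i}=\partial/\partial\xi_i$, $\widehat{\partial/\partial a_i}=-\zeta_i$, $\widehat{\partial/\partial b_i}=-\xi_i$ with $\zeta,\xi\in V$. Then $\widehat{\cM}=D_{V\times V}/\widehat{\cJ}$, where $\widehat{\cJ}$ is generated by: the polynomials $P(\zeta)$ for $P\in I(\hat{X})$ (from \eqref{de2sca1}); the cubic monomials $\xi_i\xi_j\xi_k$ (from \eqref{de2sca5}); the relations $\zeta_i\xi_j\xi_k-\zeta_j\xi_i\xi_k$ and $\zeta_i\xi_j\xi_k-\zeta_k\xi_i\xi_j$ (from \eqref{de2sca6}); and three first-order operators, one encoding the diagonal $G$-action on $V\times V$ (from \eqref{de2sca2}) and one for each of the two $\Cx$-scalings of the factors of $V\times V$ (from \eqref{de2sca3} and \eqref{de2sca4}).

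First I would compute the support. The generators $\xi_i\xi_j\xi_k$ and $P(\zeta)$ already annihilate the cyclic generator of $\widehat{\cM}$, so $\mathrm{Supp}\,\widehat{\cM}$ is contained in their common zero locus, which set-theoretically is $\hat{X}\times\{0\}\subset V\times V$: on the reduced locus each $\xi_i$ is nilpotent, hence $0$, and then $P(\zeta)=0$ for all $P\in I(\hat{X})$ forces $\zeta\in\hat{X}$. This locus is stable under the $G\times\Cx\times\Cx$-action in which $G$ acts diagonally, the first $\Cx$ scales the $\zeta$-coordinates and the second scales the $\xi$-coordinates, because $\hat{X}$ is a cone and $\{0\}$ is fixed by everything; and the three first-order generators of $\widehat{\cJ}$ are exactly the infinitesimal generators of this action, up to additive constants. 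Hence $\widehat{\cM}$ is a twisted $G\times\Cx\times\Cx$-equivariant coherent $D_{V\times V}$-module in the sense of \cite{Ho}, supported on $\hat{X}\times\{0\}$.

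Since $G$ has only finitely many orbits on $X$, the group $G\times\Cx$ has only finitely many orbits on the affine cone $\hat{X}$, so $\hat{X}\times\{0\}$ is a union of finitely many $G\times\Cx\times\Cx$-orbits (the second $\Cx$ acting trivially on $\{0\}$). By \cite{Bo}, a twisted equivariant coherent D-module supported on a finite union of orbits is regular holonomic, so $\widehat{\cM}$ is regular holonomic. Finally, $\cM$ is homogeneous for the grading $\deg\partial_{a_i}=\deg\partial_{b_i}=-1$, $\deg a_i=\deg b_i=1$ (the generators of $\cJ$ in \eqref{de2sca} being homogeneous), so by \cite{Br} regular holonomicity passes from $\widehat{\cM}$ back to $\cM$.

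As this is a verbatim adaptation of the first-derivative case, there is no genuine obstacle; the only points to watch are bookkeeping: the $b$-degree is now $2$ rather than $1$, so the Fourier-transformed Euler operator in $\xi$ carries a shifted constant and the purely-$\xi$ monomial relations are cubic rather than quadratic. One checks, as above, that the cubic vanishing $\xi_i\xi_j\xi_k=0$ still forces $\xi=0$ on the reduced support, and that the relations \eqref{de2sca6} add nothing to the support beyond $\hat{X}\times\{0\}$.
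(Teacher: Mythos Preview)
Your argument is correct and follows exactly the route the paper intends: the paper's own proof is simply the one-line remark ``The proof of Theorem \ref{holonomicity} follows here,'' and you have spelled out precisely that adaptation, with the only changes being the cubic $\xi$-monomials and the shifted Euler constants. Nothing further is needed.
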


The proof of Theorem \ref{holonomicity} follows here.

\bs
\bs

In general, for $p$-th derivatives of $\phi(a)$ satisfying $\tau$, we can construct a new system as follows:

\begin{subequations}\label{de3sca}
    \begin{align}[left = \empheqlbrace\,]
      & P(\partial_a)\phi(a,b)=0\quad \label{de3sca1}\\
      & (\sum_{i,j} x_{ij}a_i\frac{\partial}{\partial a_j}+\sum_{i,j} x_{ij}b_i\frac{\partial}{\partial b_j})\phi(a,b)=0\label{de3sca2}\\
      &(\sum_i a_i\frac{\partial}{\partial a_i}+1+p)\phi(a,b)=0\quad \label{de3sca3}\\
      &(\sum_i b_i\frac{\partial}{\partial b_i}-p)\phi(a,b)=0\label{de3sca4}\\
      &\frac{\partial}{\partial b_{k_1}}\cdots\frac{\partial}{\partial b_{k_p}}\phi(a,b)=0\quad\forall 1 \leq k_i\leq n\label{de3sca5}\\
      &\frac{\partial}{\partial a_{k_1}}\frac{\partial}{\partial b_{k_2}}\cdots\frac{\partial}{\partial b_{k_p}}\phi(a,b)=\frac{\partial}{\partial a_{k_{l_1}}}\frac{\partial}{\partial b_{k_{l_2}}}\cdots\frac{\partial}{\partial b_{k_{l_p}}}\phi(a,b)\label{de3sca6}
      \end{align}
\end{subequations}
where $1\leq k_i \leq n$ and $\{l_1,    \ldots, l_p\}$ is any permutation of $\{1,\ldots,p\}$.

\bs

The relationship between $\phi(a,b)$ and the derivatives of $\phi(a)$ is:
\[\phi(a,b)=\sum_{k_1,\ldots,k_p}b_{k_1}\cdots b_{k_p}\frac{\partial}{\partial a_{k_1}}\cdots\frac{\partial}{\partial a_{k_p}}\phi(a).\]

From our previous argument it is clear that if $G$ acts on $X$ with finitely many orbits, this system is regular holonomic.

\bs
By Theorem \ref{tautological}, $\sum_{k_1,\ldots,k_p}b_{k_1}\cdots b_{k_p}\frac{\partial}{\partial a_{k_1}}\cdots\frac{\partial}{\partial a_{k_p}}\Pi_\gamma(a)$ are solutions to system \eqref{de3sca}.

\red{Where you able to construct the equivalent scalar system for general $p$?} \blue{It is for $p$.}

\section{Differential relations}
\blue{Need some introduction here?}
\begin{thm}\cite{BHLSY},\cite{HLZ}\label{isom}
There are isomorphisms between following D-modules:
\begin{align*}
\tau(G,X,\omega_X^{-1},\beta_0)\leftrightarrow R[a]e^f/&Z^*
(\hat{\gg})(R[a]e^f)\leftrightarrow 
 (\cO_{V^\vee}\boxtimes \omega_X)|_U\otimes_{\fg}\C\\
1\quad\longleftrightarrow \quad\quad\quad&e^f \quad\quad\quad\longleftrightarrow\quad \quad\quad\frac{\Omega}{f}
\end{align*}
where $R=\C[V]/I(\hat{X})$ and $R[a]=R[V^\vee].$
\end{thm}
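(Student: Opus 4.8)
The plan is to build, for each of the two claimed identifications, an explicit $D_{V^\vee}$-linear map taking the distinguished cyclic generator to the next one, to check surjectivity by hand (which is easy), and to reduce injectivity --- the delicate point --- to a single statement about the cone $\hat X$. Since all three modules are cyclic, once the maps are in place it is enough to know that some composite around the triangle is an isomorphism, equivalently that one of the two maps is injective.

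\emph{The map $\tau\leftrightarrow R[a]e^f/Z^*(\hat\gg)(R[a]e^f)$.} Give $R[a]e^f=\big(\C[\hat X]\otimes\C[V^\vee]\big)e^f$ the $D_{V^\vee}$-module structure in which $a_i$ acts by multiplication and $\partial_{a_i}$ acts by $\partial_{a_i}+a_i^*$, where $a_i^*\in V^\vee$ is read as a linear form on $V\supset\hat X$, hence as an element of $R=\C[\hat X]$; thus the underlying $\cO_{V^\vee}$-module is $R\otimes\C[V^\vee]$ with its $D_{V^\vee}$-action twisted by $e^f$, and as a $D_{V^\vee}$-module it equals $D_{V^\vee}e^f$. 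I would define the map by $P\mapsto P\cdot e^f$, so $1\mapsto e^f$, and verify it descends to $\tau$: for $\widehat P\in J(\hat X)$ with $P\in I(\hat X)$ one has $\widehat P\cdot e^f=P(a_1^*,\dots,a_n^*)\,e^f=0$ because $P$ vanishes on $\hat X$; and for $x\in\hat\gg$ the operator $Z(x)+\beta_0(x)$ applied to $e^f$ equals $\big(\sum_{ij}x_{ij}a_ia_j^*+\beta_0(x)\big)e^f$, which lies in the submodule $Z^*(\hat\gg)(R[a]e^f)$ one quotients by --- this is the familiar ``integration by parts through $e^f$'' transferring the $\hat\gg$-action from the $\C[V^\vee]$ factor to the $\C[\hat X]$ factor, valid because $f=\sum_i a_ia_i^*$ is invariant for the combined $\hat G$-action on $\hat X\times V^\vee$, while the extra scalar $\beta_0(e)=1$ is precisely the shift produced by the $\Cx$-weight (the grading $\deg a_i=1$). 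Surjectivity is immediate: $\hat X\subset V$ is a cone, so $R$ is generated as a $\C$-algebra by the linear forms $a_i^*=\partial_{a_i}\!\cdot e^f$, and together with multiplication by the $a_i$ this gives $D_{V^\vee}\cdot e^f=R[a]e^f$.

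\emph{The map $R[a]e^f/Z^*(\hat\gg)(R[a]e^f)\leftrightarrow (\cO_{V^\vee}\boxtimes\omega_X)|_U\otimes_\fg\C$.} I would send $e^f\mapsto \Omega/f$. The mechanism is fibre integration along the canonical $\Cx$-bundle $\hat X\setminus\{0\}\cong(K_X^{-1})^\times\to X$: since $\Omega=\iota_{x_0}\hat\Omega$ arises from the global top form $\hat\Omega$ by contracting with the fibre field $x_0$, integrating $e^{-f}\hat\Omega$ out along the one-dimensional $\Cx$-fibre is a Laplace/Gelfand--Leray integral that converts the cone variable into a simple pole and returns precisely the meromorphic $d$-form $\Omega/f_a$ on $X$ with pole along $V(f_a)$. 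The map is $D_{V^\vee}$-linear because $\partial_{a_i}e^f=a_i^*e^f$ corresponds to $\partial_{a_i}\frac{\Omega}{f}=-\frac{a_i^*\Omega}{f^2}=\nabla_{\partial a_i}\frac{\Omega}{f}$ for the Gauss--Manin connection recalled above; the quotient by $Z^*(\hat\gg)$ on the source matches the $\fg$-coinvariants on the target, the remaining $\Cx$-relation being automatic by homogeneity of $\Omega/f$. Surjectivity is Griffiths' theorem in the form of Corollary \ref{filtration}: every de Rham class on $U_a$ is a $\C$-combination of the iterated derivatives $\nabla_{\partial a_{\iota_1}}\!\cdots\nabla_{\partial a_{\iota_{p-1}}}\frac{\Omega}{f}$, hence lies in $D_{V^\vee}\cdot\frac{\Omega}{f}$.

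\emph{Closing the triangle, and the main obstacle.} The composite $\tau\to R[a]e^f/Z^*(\hat\gg)(\cdots)\to (\cO_{V^\vee}\boxtimes\omega_X)|_U\otimes_\fg\C$ carries $1$ to $\Omega/f$, so it is the canonical surjection of Theorem \ref{tau-surj}. Hence, given that the two constructed maps are surjective and all three modules cyclic, it suffices to prove one of them is injective. I expect this to be the only genuine difficulty and would settle it for the first map by Fourier transform: after $a_i\mapsto\partial_{\zeta_i}$, $\partial_{a_i}\mapsto-\zeta_i$, both $\widehat\tau$ and $\widehat{R[a]e^f}$ become twisted $G\times\Cx$-equivariant $D_V$-modules supported on the cone $\hat X\subset V$ --- the one-variable analogue of the picture used in the proof of Theorem \ref{holonomicity} --- so the issue reduces to whether the defining left ideal of $\tau$ exhausts $\ker(D_{V^\vee}\to R[a]e^f)$, equivalently whether $R[a]e^f\cong D_{V^\vee}/D_{V^\vee}J(\hat X)$. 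That isomorphism is proved by comparing associated graded modules for the order filtration, and it is precisely there --- and nowhere in the formal steps above --- that one genuinely uses that $\hat X$ is a cone, that $\C[\hat X]=\Sym(V^\vee)/I(\hat X)$ carries no relations beyond $I(\hat X)$, and that $f=\sum_i a_ia_i^*$ is the universal section; for this step one invokes the detailed analysis of \cite{LSY} and \cite{HLZ}.
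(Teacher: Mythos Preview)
The paper does not prove this theorem at all: it is stated with the citations \cite{BHLSY}, \cite{HLZ} attached and is used immediately afterward without argument. So there is no ``paper's own proof'' to compare your proposal against; the authors simply quote the result from those references.

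Your sketch is broadly in the spirit of what is done in \cite{BHLSY} and \cite{HLZ}, and the formal steps (defining the two maps on generators, checking well-definedness via the invariance of $f$, surjectivity from cyclicity) are accurate. Two remarks. First, your appeal to Corollary~\ref{filtration} for surjectivity of the second map is not quite on target: that corollary concerns the de~Rham cohomology $H^d(U_a)$ and requires the hypothesis~\eqref{suffi-ample}, whereas the third module in the theorem is the Lie-algebra coinvariants $(\cO_{V^\vee}\boxtimes\omega_X)|_U\otimes_{\fg}\C$; the surjectivity one actually needs is closer to Theorem~\ref{tau-surj} together with the identification of that coinvariant module with $H^0\pi^\vee_+\cO_U$. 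Second, you yourself concede that the crux --- injectivity, i.e.\ that the defining ideal of $\tau$ exhausts the annihilator of $e^f$ --- is deferred to ``the detailed analysis of \cite{LSY} and \cite{HLZ}.'' That is exactly the nontrivial content of the cited theorems, so your proposal is more an organizational outline than an independent proof; which is fine, since the present paper does not attempt one either.
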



 These D-module isomorphisms allow us to extract some explicit information regarding vanishing of periods and their derivatives as follows.

\green{Perhaps we use $\tau$ instead of $\tau_f$ for the D-module.} \blue{OK}

\green{Some brief introduction seems needed here: these D-module isomorphisms allow us to extract some explicit information regarding vanishing of periods and its derivatives.} \blue{Fixed, is it OK?}

\bs
The isomorphisms in the Theorem \ref{isom} induce, for each $f_a\in B$, the isomorphism \cite[Theorem 2.9]{BHLSY}
\[\Psi: (Re^{f_a}/Z^*(\hat{\gg})(Re^{f_a}))^*\xrightarrow{\simeq} \Hom_{D_{V^\vee}}(\tau,\cO_a).\]
In particular, if $\Gamma$ is a $d$-cycle in $X-V(f_a)$, then the linear function 
$$\lambda_\Gamma:R e^{f_a}\ra\C,\hskip.2in p(\zeta)e^{f_a}\mapsto \left.\left(p(\partial)\int_{\Gamma}\frac{\Omega}{f}\right)\right|_{f_a}$$
vanishes on the subspace $Z^*(\hat{\gg})(Re^{f_a})$.

\begin{cor}\label{relation}
Given $p(\zeta)\in R$,
define 
$$\cN(p):=\{f_a\in B\mid p(\zeta)e^{f_{a}}\in Z^*(\hat{\gg})(Re^{f_{a}})\}.$$ 
Then it is a closed subset of $B$. Moreover, for any $f_a\in\cN(p)$ we have a differential relation: $$\left.\left(p(\partial)\int_{\Gamma}\frac{\Omega}{f}\right)\right|_{f_a}=0.$$\end{cor}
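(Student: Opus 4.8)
The plan is to prove the two assertions of the corollary separately, using the isomorphism $\Psi$ recalled just above the statement.

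First I would establish that $\cN(p)$ is closed. The idea is that membership $p(\zeta)e^{f_a}\in Z^*(\hat\gg)(Re^{f_a})$ is a rank condition that varies algebraically (hence semicontinuously) with $a\in B$. Concretely, fix the finite-dimensional picture: $R=\C[V]/I(\hat X)$ is graded, and the operators $Z^*(x)$ for $x\in\hat\gg$ act on $Re^{f_a}$; choosing a basis of $\hat\gg$ and working degree by degree (or, more cleanly, noting that $p$ lies in a fixed finite-dimensional graded piece and $Z^*(\hat\gg)(Re^{f_a})$ is spanned by the images of finitely many basis vectors in bounded degree), the condition ``$p(\zeta)e^{f_a}$ lies in the span of these images'' is the vanishing of all maximal minors of a matrix whose entries are polynomial in the coordinates $a$ of $f_a\in V^\vee$. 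Therefore $\cN(p)$ is the intersection of $B$ with a Zariski-closed subset of $V^\vee$, hence closed in $B$. I would phrase this carefully so as not to assume more regularity in $a$ than is actually available; the cleanest route is to observe that the assignment $a\mapsto (Re^{f_a}/Z^*(\hat\gg)(Re^{f_a}))$ is the fiber of a coherent sheaf and apply upper semicontinuity of fiber dimension, but the explicit-minors argument is elementary and self-contained.

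Second, for the differential relation: suppose $f_a\in\cN(p)$, so $p(\zeta)e^{f_a}\in Z^*(\hat\gg)(Re^{f_a})$. For any $d$-cycle $\Gamma$ in $X-V(f_a)$, the linear functional $\lambda_\Gamma: Re^{f_a}\to\C$, $q(\zeta)e^{f_a}\mapsto \big(q(\partial)\int_\Gamma \tfrac{\Omega}{f}\big)\big|_{f_a}$, vanishes on $Z^*(\hat\gg)(Re^{f_a})$ — this is exactly the statement quoted from \cite[Theorem 2.9]{BHLSY} that $\lambda_\Gamma$ descends through $\Psi$ to an element of $\Hom_{D_{V^\vee}}(\tau,\cO_a)$, equivalently that the period integral $\int_\Gamma\tfrac{\Omega}{f}$ solves $\tau$ (Theorem \ref{tautological}). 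Applying this to the element $p(\zeta)e^{f_a}\in Z^*(\hat\gg)(Re^{f_a})$ gives $\lambda_\Gamma(p(\zeta)e^{f_a})=\big(p(\partial)\int_\Gamma\tfrac{\Omega}{f}\big)\big|_{f_a}=0$, which is the desired relation.

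The only genuine obstacle is the closedness claim: one must make sure the linear-algebra condition defining $\cN(p)$ really is cut out by algebraic equations in $a$, which requires knowing that $f\mapsto e^{f_a}$ and the action of $Z^*(\hat\gg)$ depend polynomially on $a$ — true because $f=\sum a_i a_i^*$ is the universal section and $Z^*(x)$ acts on $R[a]e^f$ by differential operators with polynomial coefficients (Theorem \ref{isom}) — and that one may restrict attention to a fixed bounded range of degrees, so that only finitely many minors are involved. The second part is then immediate from the already-established functional-analytic reformulation, requiring no new computation.
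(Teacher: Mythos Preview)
Your argument for the differential relation is correct and is exactly the paper's implicit reasoning: the corollary is stated immediately after the vanishing of $\lambda_\Gamma$ on $Z^*(\hat\gg)(Re^{f_a})$ is recalled, and the relation follows at once by evaluating $\lambda_\Gamma$ at $p(\zeta)e^{f_a}$. The paper gives no further proof.

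For closedness, however, your argument has a genuine gap. The assertion that ``$p(\zeta)e^{f_a}$ lies in the span of these images'' is cut out by vanishing of maximal minors is not correct in general: if $v$ is fixed and $w_1(a),\dots,w_k(a)$ vary polynomially in $a$, the locus $\{a: v\in\mathrm{span}(w_i(a))\}$ need not be closed, because the rank of the $w_i$ can drop. (Take $v=1$ and $w_1(a)=a$ in $\C^1$: then $v\in\mathrm{span}(w_1(a))$ exactly for $a\neq 0$.) Upper semicontinuity of fiber dimension does not rescue this either --- it tells you where the \emph{quotient} jumps, not where a fixed section vanishes; the zero locus of a section of a general coherent sheaf can be open (think of $1\in\cO/(x)$ on $\A^1$). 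What is actually needed is that the quotient $Re^{f_a}/Z^*(\hat\gg)(Re^{f_a})$ has \emph{constant} dimension as $a$ ranges over $B$, so that the associated sheaf is locally free there; then $\cN(p)$ is the zero locus of a section of a vector bundle and hence closed. This constancy is available precisely through the isomorphism $\Psi$ you invoke for the second part: the fiber dimension equals $\dim\Hom_{D_{V^\vee}}(\tau,\cO_a)$, the local solution rank of the holonomic system $\tau$, which is locally constant on $B$. So the missing ingredient is already on the table, but neither of the two mechanisms you name (minors, bare semicontinuity) delivers it.
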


\bs
If $X=\P^d$, then $\omega_X^{-1}=\cO(d+1)$ and $V=\Gamma(X,\cO(d+1))^\vee$. We can identify $R$ with the subring of $\C[x]:=\C[x_0,
\ldots,x_d]$ consisting of polynomials spanned by monomials of degree divisible by $d+1$.
By Lemma 2.12 in \cite{BHLSY}, 
\[\hat{\gg}\cdot(Re^f)=Re^f\cap\sum_i\frac{\partial}{\partial x_i}(\C[x]e^f),\]
meaning that elements of the form $\frac{\partial}{\partial x_i} x_jp(x)e^f$ are in $Z^*(\hat{\gg})(Re^{f})$ for $p(x)\in R$.

Now we specialize to the Fermat case $f_F=x_0^{d+1}+\cdots+x_d^{d+1}$ and look at two examples.
\begin{exm}
Let $X=\P^1$, $f=a_0x_0x_1+a_1x_0^2+a_2x_1^2$. We look at $\frac{\partial}{\partial a_0}e^{f_F}=x_0x_1e^{f_F} $. 
Since $(\frac{\partial}{\partial x_0}x_1)e^{f_F}=2x_0x_1e^{f_F}\in Z^*(\hat{\gg})(Re^{f_F}),$ 
it implies that $f_F\in \cN(x_0x_1)$
and by Corollary \ref{relation} we have a differential relation $\left.\left(\frac{\partial}{\partial a_0}\int_{\Gamma}\frac{\Omega}{f}\right)\right|_{f_F}=0$. 

We know that when $|a_0|>>0$, $\int_{\Gamma}\frac{\Omega}{f_F}=\frac{1}{a_0}\sum_{k=0}^{\infty}\frac{(2k)!}{k!k!}(\frac{a_1a_2}{a_0^2})^k$, so the above equality means that any analytic continuation of this power series function must have a vanishing derivative with respect to $a_0$ at the Fermat point, which we can verify easily since the analytic continuation is given explicitly by $ (a_0^2-4a_1a_2)^{-\frac{1}{2}}$.
\end{exm}
\bs

\begin{exm}
Let $X=\P^2$, $f=a_0x_0x_1x_2+a_1x^2_0x_1+a_2x_0x^2_1+a_3x^3_1+a_4x^2_1x_2+a_5x_1x_2^2+a_6x^3_2+a_7x_0x^2_2+a_8x^2_0x_2+a_9x_0^3$, and $\frac{\partial^2}{\partial a_0^2}e^{f_F}=(x_0x_1x_2)^2e^{f_F}.$ Since $(\frac{\partial}{\partial x_0}x_1)x_1x_2^2e^{f_F}=3x_0^2x_1^2x_2^2e^{f_F}\in Z^*(\hat{\gg})(Re^{f_F}),$ $f_F\in \cN((x_0x_1x_2)^2)$, therefore
by Corollary \ref{relation} we have a differential relation $\left.\left(\frac{\partial^2}{\partial a_0^2}\int_{\Gamma}\frac{\Omega}{f}\right)\right|_{f_F}=0$. 
\end{exm}

\section{Concluding remarks}

We conclude this paper with some remarks about differential zeros of period integrals of differential systems for period mappings.

For a given $a\in B$, what can we say about the the function space $\{p(\zeta)\in R\mid p(\zeta)e^{f_{a}}\in Z^*(\hat{\gg})(Re^{f_{a}})\}$? 
This space is unfortunately neither an ideal nor a $\hat\gg$-submodule of $R$ in general, unless $f_a$ is $\hat\gg$-invariant in which case it is equal to $Z^*(\hat{\gg})R$. 
However, it seems that it is more interesting to consider the closed subset $\cN(p)\subset B$, for each function $p(\zeta)\in R$ that we defined above. For this is an algebraic set that gives us the vanishing locus of certain derivatives (corresponding to $p$) of all period integrals. For example, it is well known that period integrals of certain CY hypersurfaces can be represented by (generalized) hypergeometric functions. In this case, the vanishing locus above therefore translates into a monodromy invariant statement about differential zeros of the hypergeometric functions in question.

A remark about our new differential systems for period mappings is in order.
For the family of CYs $\cY$, since the period mapping is given by higher derivatives of the periods of $(d-1,0)$ forms, any information about the period mapping can in principle be derived from period integrals, albeit somewhat indirectly. However, the point here is that an explicit regular holonomic system for the full period mapping would give us a way to study the structure of this mapping by D-module techniques directly. Thanks to the Riemann-Hilbert correspondence, these techniqus have proven to be a very fruitful approach to geometric questions about the family $\cY$ (e.g. degenerations, monodromy, differential zeros, etc) \cite{BHLSY,HLZ} when applied to $\tau$. The new differential system that we have constructed for the period mapping is in fact nothing but a tautological system. Namely, it is a regular holonomic D-module defined by a polynomial ideal together with a set of first order symmetry operators -- conceptually of the same type as $\tau$.  It is therefore {\it directly} amenable to the same tools (Fourier transforms, Riemann-Hilbert, Lie algebra homology, etc) we applied to investigate $\tau$ itself. The hope is that understanding the structure of the new D-module will shed new light on Hodge-theoretic questions about the family $\cY$. For example, there is an analogue Theorem \ref{isom} which allows us to construct differential zeros of solution sheaves for this class of D-modules. It would be interesting to understand their implications about period mappings. We would like to return to these questions in a future paper.

\section{Acknowledgement}

We dedicate this paper to our mentor, teacher and colleague Professor Shing-Tung Yau on the occasion of his 65th birthday, 60 years after the celebrated Calabi conjecture. The authors AH and BHL also thank their collaborators S. Bloch, R. Song, D. Srinivas, S.-T. Yau and X. Zhu for their contributions to the foundation of the theory of tautological systems, upon which this paper is based. Finally, we thank the referee for corrections and helpful comments. BHL is partially supported by NSF FRG grant DMS 1159049.


\vskip.1in

\noindent\address {\SMALL J. Chen, Department of Mathematics, Brandeis University, Waltham MA 02454.\\ jychen@brandeis.edu.}

\noindent\address {\SMALL A. Huang, Department of Mathematics, Harvard University, Cambridge MA 02138. \\ anhuang@math.harvard.edu.}

\noindent\address {\SMALL B.H. Lian, Department of Mathematics, Brandeis University, Waltham MA 02454.\\ lian@brandeis.edu.}

\end{document}